\documentclass{article}
\usepackage{amsthm}
\usepackage{systeme}
\usepackage{geometry}%
\usepackage[pagebackref]{hyperref}%
\usepackage{lmodern,mathtools, amssymb}%
\usepackage[utf8]{inputenc}%
\usepackage[T1]{fontenc}%
\usepackage{todonotes}%
\usepackage{bookmark}
\usepackage{color}%
\usepackage{dsfont}
\usepackage{pgf,tikz,pgfplots}
\pgfplotsset{compat=1.14}
\usepackage{mathrsfs}
\usetikzlibrary{arrows}
\usepackage{array}
\usepackage{enumitem}  

\newcolumntype{P}[1]{>{\centering\arraybackslash}p{#1}}

\allowdisplaybreaks

\newtheorem{theorem}{Theorem}[section]%
\newtheorem{proposition}[theorem]{Proposition}%
\newtheorem{lemma}[theorem]{Lemma}%
\newtheorem{remark}[theorem]{Remark}%

\def\half{\frac{1}{2}}

\def\1{{\mathbf 1}}

\def\l{\lambda}
\def\t{\theta}

\def\e{\textrm{\bf e}}


\def\N{{\mathbb N}}

\def\Q{{\mathbb Q}}
\def\R{{\mathbb R}}

\def\P{{\mathbb P}}
\def\E{{\mathbb E}}

\geometry{hmargin=2.5cm,vmargin=3cm}
\title{Strong solutions to beta-Jacobi processes}
\author{Ez\'echiel Kahn \footnote{CERMICS, Ecole des Ponts, INRIA, Marne-la-Vall\'ee, France. Email:  ezechiel.kahn@enpc.fr}}
\date{\today}

\begin{document}
\maketitle

\begin{abstract}
    The purpose of this paper is to study the existence and uniqueness of solutions to a system of Stochastic Differential Equations (SDEs). The coordinates are bounded by zero and one, and repulse each other according to a Coulombian like interaction force. We show the existence of strong and pathwise unique solutions to the system until the first multiple collision at zero or one, and give a   sufficient condition on the parameters of the SDEs for this multiple collision not to occur in finite time.
\end{abstract}

\textbf{Keywords: }stochastic differential equations, diffusions with gradient drift, singular interaction, random matrices.

\textbf{AMS Subject Classification (2020):} 60H10, 60J60, 60B20, 60G17, 60J70.

\section{Introduction}
Let $p,q\ge0,\beta>0$, $n\ge1$, and $\mathbf{B}=(B^1_t,\dots,B^n_t)_t$ be a $n$-dimensional Brownian motion. Our SDEs system of interest is the following :
\begin{align}
    \label{eq:main}
    d\lambda^i_t & = 2\sqrt{\lambda^i_t(1-\lambda^i_t)}dB^i_t+\beta\left[p-(p+q)\lambda^i_t+\sum_{j\neq i}\frac{\lambda^i_t(1-\lambda^j_t)+\lambda^j_t(1-\lambda^i_t)}{\lambda^i_t-\lambda^j_t}\right]dt \tag{J(p,q)}\\
    & = 2\sqrt{\lambda^i_t(1-\lambda^i_t)}dB^i_t+\beta\left[p-n+1-(p+q)\lambda^i_t+2\lambda^i_t\sum_{j\neq i}\frac{1-\lambda^j_t}{\lambda^i_t-\lambda^j_t}\right]dt \text{ for all }i\in\{1,\dots,n\} \label{eq:maindual}\\
    &0\leq \lambda^1_t<\dots< \lambda^n_t\le1, \text{  a.s. } dt-\text{almost everywhere}. \label{eq:order}
\end{align}

The form \eqref{eq:maindual} can be found thanks to the manipulation
\begin{align}
     \sum_{j\neq i}\frac{\lambda^i_t(1-\lambda^j_t)+\lambda^j_t(1-\lambda^i_t)}{\lambda^i_t-\lambda^j_t}&=\sum_{j\neq i}\frac{2\lambda^i_t(1-\lambda^j_t)+\lambda^j_t-\lambda^i_t}{\lambda^i_t-\lambda^j_t}\nonumber\\
     &=-(n-1)+2\lambda^i_t\sum_{j\neq i}\frac{1-\lambda^j_t}{\lambda^i_t-\lambda^j_t}.\label{eq:dual_manipilation}
\end{align}
The strict inequalities (\ref{eq:order}) allow the interaction terms in the system \eqref{eq:main} to make sense.
We will look for continuous solutions to the SDEs \eqref{eq:main}. Thus, by continuity, we have for all $t\ge0$,
$0\leq \lambda^1_t\leq\dots\leq \lambda^n_t\le1, \text{  a.s.}$ While, for $n=1$, the system reduces to the real Jacobi SDE $d\lambda^1_t = 2\sqrt{\lambda^1_t(1-\lambda^1_t)}dB^1_t+\beta\left[p-(p+q)\lambda^1_t\right]dt$ studied in \cite{doumerc2005matrices} (see Lemma \ref{lemma:doumerc} in the Appendix), the coordinates are repulsed by a Coulombian like interaction when $n\ge2$.

Our goal is to study the existence and uniqueness of strong solutions to this system of SDEs in a  general setting for the parameters $p$, $q$,and $\beta$, especially in the case $\beta<1$ and $ p\wedge q-n+1<1/\beta$ which is not covered to our knowledge by the literature.
This system is related to the famous Dyson Brownian motions which satisfy 
\begin{equation*}
    d\phi^{i}_t = \sqrt{2}dB^i_t+\beta\sum_{j\neq i}\frac{dt}{\phi^{i}_t-\phi^{j}_t}, \text{ for all }i\in\{1,\dots,n\},
\end{equation*}
 where the only type of singularity is when two particles collide.
This system admits a unique strong solution (see  \cite{MR1217451}) when $\beta\ge1$. 

The difficulty in proving the existence of solutions to \eqref{eq:main} comes from the fact that there are singularities  both when a particle touches zero or one, where the derivative of the square root diffusion coefficient explodes, and when two particles touch each other. If we define $D=\{0<\lambda^1<\lambda^2<\dots<\lambda^n<1\}$, a collision occurs when the process $\Lambda=(\lambda^1_t,\dots,\lambda^n_t)_t$ hits the boundary $\partial D$ made of the union of $\{\lambda^i=\lambda^{i+1}\}$ for $i\in\{1,\dots,n-1\}$ and $\{\lambda^1=0\}$ and $\{\lambda^n=1\}$. A multiple collision occurs when two of these sets are reached at the same time and we will speak about "collision between particles" when two particles touch each other.

The same kind of difficulties emerge  in the study of the beta-Wishart system of SDEs  (see \cite{JK}) which  writes

\begin{eqnarray*}
  d\lambda_t^{i}& =& 2\sqrt{\lambda_t^{i}}dB_t^{i} + \left( \alpha-2\gamma\lambda_t^{i}+ \beta\sum_{j\ne i}\frac{\lambda_t^{i}+\lambda_t^{j}}{\lambda_t^{i}-\lambda_t^{j}} \right)dt \text{ for all  } i\in \{1,\dots,n\}, \\
  &&0\leq \lambda^1_t<\dots< \lambda^n_t, \text{  a.s. } dt-\text{a.e}.,
  \end{eqnarray*}
where $\alpha\geq0$  ,$\gamma\in\R$ and $\beta>0$. Indeed, we can find in this system the same kind of singularities both when  a particle touches  zero and when two particles collide.
The methods developped in \cite{JK} to overcome the two types of singularities and to prove the existence and uniqueness of a strong solution to the system of SDEs are adapted to the beta-Jacobi system here.

\paragraph{}
Our results about the SDEs \eqref{eq:main} are the following. In Proposition \ref{prop:multiplecollision}, we state that $k\beta(p\wedge q-n+k)\ge2$ is a  sufficient condition for multiple collisions between $k$ particles not to occur at position zero  or one in finite time.  Our main result Theorem \ref{thm:existence} gives the existence and uniqueness of solutions to the SDEs \eqref{eq:main} when $\beta\in(0,1)$ and $p\wedge q-n+1\ge\frac{1}{\beta}-1$   (so that there is no multiple collision at the origin or at position one). We explicit in Proposition \ref{prop:stationary}   the unique stationary probability measure of the SDEs \eqref{eq:main}.

The paper is organized as follows. The remaining of the introduction is devoted to the bibliographical background of this work. In Section 2, we state our main results. We prove in Section 3 some useful properties of the solutions to the system \eqref{eq:main}, before checking Proposition \ref{prop:multiplecollision} and Theorem \ref{thm:existence}  in Section 4. Section 5 is an Appendix stating some well-known results that we use in our proofs.

\paragraph{}
The beta-Jacobi process emerges from the matrix Jacobi process introduced and studied in \cite{doumerc2005matrices} : let $n,m,p>0$ be three integers,  $\Theta$  a $m\times m$ orthogonal  Brownian motion (see \cite{levy:hal-00650653} for a construction of the orthogonal Brownian motion), and let $X$ be the matrix made of the $n$ first lines and the $p$ first columns of $\Theta$. Then, if $p\ge n+1$, $q\ge n+1$ and $p+q=m$, the process $J=XX^*$, where $*$ is the adjoint operator, is a diffusion  and a solution to the SDE
\[dJ = \sqrt{J}dB\sqrt{I_n-J}+\sqrt{I_n-J}dB^*\sqrt{J}+(pI-(p+q)J)dt,\]
where $B$ is a $n\times  n$ matrix filled with Brownian motions (see \cite[Theorem 9.2.3]{doumerc2005matrices}). The eigenvalues of $J$ then verify the system of SDEs \eqref{eq:main} with $\beta=1$ (see \cite[Theorem 9.3.1]{doumerc2005matrices}). When the orthogonal Brownian motion $\Theta$ is replaced by a unitary Brownian motion,  the eigenvalues of $J$ then verify the system of SDEs \eqref{eq:main} with $\beta=2$ (see \cite{Demni2009BetaJP}). The system \eqref{eq:main} is thereby a generalization of this matrix inherited system of SDEs for general $p,q\ge0$ and $\beta>0$.
\paragraph{}
Let us consider the change of variable  $\phi^i_t=\mathrm{arcsin}(\sqrt{\lambda^i_t})$ and set $\Phi=(\phi^1_t,\dots,\phi^n_t)_t$. We apply Itô's formula, formally after the stopping time $\inf\{s\ge0:\lambda^1_s(1-\lambda^n_s)=0\}$ since the square root function is not twice differentiable at zero, and obtain, after trigonometric computations that the reader can find in \cite{Demni2009BetaJP} :

\begin{align}
    \label{eq:phi}
    d\phi^i_t&=dB^i_t+\left\{\frac{\beta(p-q)}{2}\mathrm{cot}\phi^i_t+(\beta(q-n+1)-1)\mathrm{cot}(2\phi^i_t)+\frac{\beta}{2}\sum_{j\ne i}\left[\mathrm{cot}(\phi^i_t+\phi^j_t)+\mathrm{cot}(\phi^i_t-\phi^j_t)\right]\right\}dt\\
    &0\leq \phi^1_t<\dots< \phi^n_t\le\frac{\pi}{2}, \text{  a.s. } dt-\text{a.e.}, \nonumber
\end{align}
which can be rewritten 
\begin{align}
    \label{eq:phigrad}
    d\phi^i_t&=dB^i_t-\partial_iV(\phi^1_t,\dots,\phi^n_t)dt \text{ for all }i\in\{1,\dots,n\},
\end{align}
with
\begin{equation}
\begin{split}
    V(\phi^1,\dots,\phi^n)=&-\sum_{i=1}^n\Bigg\{\frac{\beta(p-q)}{2}\ln|\sin\phi^i|+\frac{\beta(q-n+1)-1}{2}\ln|\sin (2\phi^i_t)|\\
    &+\frac{\beta}{4}\sum_{j\neq i}\left(\ln|\sin(\phi^i+\phi^j)|+\ln|\sin(\phi^i-\phi^j)|\right)\Bigg\}.\label{eq:potential}
\end{split}
\end{equation}

Systems of interacting particles following equations of the type
\begin{equation}
\label{example}
    d\phi^i_t = b_i(\phi^i_t)dt + \sigma_i (\phi^i_t)dB^i_t-\partial_i V(\phi^1_t,\dots,\phi^n_t)dt \text{ for all }i\in\{1,\dots,n\},
\end{equation}
 where $V:\mathbb{R}^n\mapsto(-\infty,+\infty]$ is a lower semi continuous convex function such that $D=\{x\in\mathbb{R}^n:V(x)<+\infty\}$ is a  non-empty convex set and $V$ is continuously differentiable in $D$ have been studied by many authors. For instance, in the peculiar case $\sigma_i(\phi^i)=\sigma>0$, $b_i(\phi^i)=0$ and $V(\phi^1,\dots,\phi^n)=-\frac{\beta}{2}\sum_{i=1}^n\sum_{j\neq i}\ln|\phi^i-\phi^j|+\frac{\theta}{2}\sum_{i=1}^n(\phi^i)^2$ with $\beta\ge\frac{\sigma^2}{2}$ and $\theta>0$, the existence and uniqueness of a strong solution to (\ref{example}) were derived in \cite{MR1217451}.

  \paragraph{Link with the multivalued stochastic differential equations theory.}
The systems of type (\ref{example}) with $b_i$ and $\sigma_i$ Lipschitz and $(\phi^1_0,\dots,\phi^n_0)\in\bar D$  were  deeply studied by Cépa and Lépingle   for instance in \cite{MR1440140} and \cite{MR1875671}  where they apply Cépa's multivalued stochastic differential equations theory developed in  \cite{MR1459451}. This theory treats the existence and uniqueness of solutions to multivalued SDEs associated with a convex function defined on a domain of $\mathbb{R}^n$. In  \cite{MR2792586}, Lépingle applied this theory to a constrained Brownian motion between reflecting or repellent walls of  Weyl chambers. The boundary behavior of the convex function dictates the behavior of the process  on these same boundaries (hitting or not the boundary in finite time, reflection on the boundary, etc.). Our SDEs of interest rewritten in the form (\ref{eq:phigrad}) thanks to the square root change of variables can be seen this way, and we will exploit this connection in the paper.

\paragraph{Link with radial Dunkl processes}(see \cite{MR2566989} for a more complete description of the theory)\

Let us define a reduced root system $R$ by a finite set in $\mathbb{R}^n\backslash \{0\}$ spanning $\mathbb{R}^n$ 
such that
\begin{itemize}
    \item for all $\alpha\in R$, $R\cap\mathbb{R}\alpha=\{\alpha,-\alpha\}$,
    \item  for all $\alpha\in R$, $\sigma_\alpha(R)=R$,
\end{itemize}
where $\sigma_\alpha$ is the reflection with respect to the hyperplane orthogonal to $\alpha$. A simple system $\Delta$ is a basis of $\mathbb{R}^n$ which induces a total ordering in $R$ the following way : a root $\alpha\in R$ is  positive if it is a positive linear combination of elements of $\Delta$. A simple system $\Delta$ being fixed, we can thus define $R_+$ as the set of positive roots of $R$.
When $\sigma_i = 1$ and $V$ takes the form 
\begin{equation}
    V : \phi\mapsto -\sum_{\alpha\in R_+}k(\alpha)\ln(\langle\alpha,\phi\rangle), \text{ }x\in D,\nonumber
\end{equation}
where $D$ is the positive Weyl chamber defined by 
\[D=\{\phi\in \mathbb{R}^n, \langle \alpha,\phi\rangle>0\text{ }\forall\alpha\in R_+\},\]
   Demni proved in \cite[Theorem 1]{MR2566989} and \cite{Demni2009BetaJP} the existence and uniqueness of a solution to (\ref{example}) on the domain $\bar D$ when $k(\alpha)>0$ for all $\alpha\in R_+$. To do so, he applied Cépa's multivalued stochastic differential equations theory. This system corresponds to \eqref{eq:main} when we make the following modification in $V$ : the convex function $\phi\mapsto-\ln(\langle\alpha,\phi\rangle)$ should be substitued by $\phi\mapsto-\ln(\sin(\langle\alpha,\phi\rangle))$,
   and for some choice of $R_+$ and $k$. 
 Indeed, when the root system is of so-called $BC_n$-type,  it is defined by 
 \begin{align}
     R &= \{\pm e_i,\pm 2e_i, \pm e_i \pm e_j, 1\leq i<j\leq n\}, \nonumber\\
     \Delta &= \{e_{i+1}-e_{i}, 1\leq i \leq n-1 , e_1\},\nonumber\\
     R_+ & = \{e_i,2e_i, 1\leq i\leq n, e_j\pm e_i, 1\leq i<j\leq n\},\nonumber\\
     D &= \left\{\phi\in\left[0,\frac{\pi}{2}\right]^n, 0<\phi^1<\dots <\phi^n<\frac{\pi}{2}\right\}, \nonumber
 \end{align}
 which,  with the right choice of $k$   gives equation (\ref{eq:phigrad})  (see (\ref{eq:potential})). 
 The condition $k(\alpha)>0$ for all $\alpha\in R_+$ implies $p> q$ and $q-n+1>\frac{1}{\beta}$. As a corollary, Demni deduces the existence and uniqueness of a strong solution to \eqref{eq:main} under the condition $p\wedge q-n+1>\frac{1}{\beta}$.
 We seek here to obtain the existence of a solution to \eqref{eq:main} while relaxing this  inequality. Demni moreover proved in \cite{Demni2009BetaJP} that for $\beta<1$, there is collision between any two neighbour particles in finite time almost surely.

\paragraph{Link with other works.}

The reader will find in Graczyk and  Malecki  \cite{MR3076363} and \cite{graczyk2014} a treatment of equations of the form 
\begin{equation*}
\begin{split}
     d\lambda^i_t&=\sigma_i(\lambda^i_t)dB^i_t+\left(b_i(\lambda^i_t)+ \sum_{j\neq i}\frac{H_{i,j}(\lambda^i_t,\lambda^j_t)}{\lambda^i_t-\lambda^j_t}\right)dt,\text{ for all } i\in\{1,\dots,n\}\\
     &\lambda^1_t\leq\dots\leq\lambda^n_t,\quad t\geq0,
\end{split}
\end{equation*}
where the functions $\sigma_i,b_i$ and $H_{i,j}$ are assumed continuous, with $H_{i,j}$ non-negative and symmetric in the sense that $H_{i,j}(x,y)=H_{j,i}(y,x)$ for all $x,y\in\mathbb{R}$.
The system \eqref{eq:main} is recovered in the particular case when $H_{i,j}(\lambda^i,\lambda^j) = \beta(\lambda^i(1+\lambda^j)+\lambda^j(1+\lambda^i))$, $\sigma_i(\lambda^i)=2\sqrt{\lambda^i(1-\lambda^i)}$ and $b_i(\lambda^i)=\beta(p-(p+q)\lambda^i)$. According to \cite[Section 6.5]{graczyk2014}, \eqref{eq:main} admits a strong solution on the time interval $[0,+\infty)$ when $\beta\geq1$. In this regime, the authors proved that there is no collision between the particles. They also demonstrated the  pathwise uniqueness of the solutions for every $\beta>0$, as recalled in Lemma \ref{lemma:pathwise_uniqueness} below.

In all these references, $\beta$ is identified as a fundamental parameter, its position relative to $1$ governing the possibility of collisions between the particles.

\paragraph{Acknowledgement} : I thank Benjamin Jourdain and Djalil Chafaï for numerous fruitful discussions. 


\section{Results}
Let us begin by the fundamental following remark.
\begin{remark}
  \label{rmk:dual}
  If $(\lambda^1_t,\dots,\lambda^n_t)_t$ is solution to \eqref{eq:main}, then $(1-\lambda^n_t,\dots,1-\lambda^1_t)_t$ is solution to $J(q,p)$ for the Brownian motion $-\mathbf{B}$.
\end{remark}

The form (\ref{eq:maindual}) of the SDEs combined with Remark \ref{rmk:dual} hint that $\beta(p\wedge q -n+1)$ is a fundamental parameter impacting the existence of solutions. Consequently, we will study the range of values of this coefficient for which the system has a solution. For instance, if we assume $p-n+1<0$, we have:

\begin{align}
  d\lambda^1_t  &= 2\sqrt{\lambda^1_t(1-\lambda^1_t)}dB^1_t+\beta\left[p-n+1-(p+q)\lambda^1_t+2\lambda^1_t\sum_{j> 1}\frac{1-\lambda^j_t}{\lambda^1_t-\lambda^j_t}\right]dt\label{eq:lambda1}\\
  &\leq2\sqrt{\lambda^1_t(1-\lambda^1_t)}dB^1_t.\nonumber
\end{align}
Then, according to the pathwise comparison theorem of Ikeda and Watanabe (that we recall in Theorem \ref{Ikeda} below) : 
\[\lambda^1_t\leq r_t \text{ a.s.  for all }t\geq0,\]
where
\[r_t=\lambda^1_0+2\int_0^t\sqrt{r_s(1-r_s)}dB^1_s\text{ for all }t\geq0,\]
which is a real Jacobi process. By Lemma \ref{lemma:doumerc},  the stopping time $T=\inf\{t\ge0:r_t=0\}$ verifies $\P(T<\infty)=1$. On $\{T<\infty\}$, after $T$, $r$ stays at zero indefinitely. As the drift in (\ref{eq:lambda1}) is negative when $\lambda^1_t=0$ and therefore when $r_t=0$, it will stay strictly negative on a time interval of positive measure. Consequently, the system of SDEs has no global solution. If we assume $q-n+1<0$ instead of $p-n+1<0$, the same reasoning can be applied replacing $(\lambda^1_t,\dots,\lambda^n_t)$ by $(1-\lambda^n_t,\dots,1-\lambda^1_t)$ and interverting $p$ and $q$ in (\ref{eq:lambda1}). 

We thus proved the following result : 

\begin{remark}
  A necessary condition for the existence of a global solution to \eqref{eq:main} is $p\wedge q -n+1>0$.
\end{remark}
We will thus assume this condition in the remaining of the paper.

It is proved in \cite[Corollary 9]{grac2013} with condition $0\le\lambda^1_0<\dots<\lambda^n_0\le1$ a.s. that for $\beta\ge1$, the SDEs \eqref{eq:main}has a unique global strong solution and that there is no collision between the particles.

Before relaxing the condition $p>q$, Demni proved in \cite{Demni2009BetaJP} by applying Cépa's multivalued equations theory (see \cite{MR1459451}) that under the conditions $p>q$,  $p\wedge q -n+1>1/\beta$, $\beta>0$ and $0\leq\lambda^1_0\leq\dots\leq\lambda^n_0\leq1$, the system of SDEs \eqref{eq:main} admits a unique strong solution. The conditions $p>q$ and $p\wedge q -n+1>1/\beta$ ensure the convexity of the potential $V$ defined in (\ref{eq:potential}) which is needed in Cépa's multivalued equations theory. In Theorem \ref{thm:existence}, we tackle the existence and uniqueness problem on wider ranges for the parameter $p\wedge q -n+1$.

Our following result applies in the case $p\wedge q -n+1\leq1/\beta$.

\begin{theorem}
  \label{thm:existence} Let us assume $\beta<1$, $p\wedge q-n+1>0$.  Let the initial  condition  $\Lambda_0=(\lambda^1_0,\dots, \lambda^n_0)$ be independent from the Brownian motion $\mathbf{B}$ and such that $0\leq\lambda^1_0\leq\dots\leq\lambda^n_0\le1$ a.s. and $\lambda^2_0(1-\lambda^{n-1}_0)>0 $ a.s.
   
   Then, the system of SDEs \eqref{eq:main} has a unique strong solution defined on the time interval $$[0,\underset{\epsilon\rightarrow0}{\lim}\zeta_\epsilon),$$ where, for  $\epsilon>0$, 
  \begin{equation}
        \zeta_\epsilon=\inf\{t\geq0 :  \lambda^1_t\leq\epsilon \text{ and }\lambda^2_t-\lambda^1_t\leq\epsilon\}\wedge \inf\{t\geq0 :  \lambda^n_t\geq1-\epsilon \text{ and }\lambda^n_t-\lambda^{n-1}_t\leq\epsilon\}.\nonumber
    \end{equation}
  
    Moreover, 
    \begin{align}
    \text{(i) } &\text{ if }p\wedge q-n+1\geq \frac{1}{\beta}-1 \text{ then }\underset{\epsilon\rightarrow0}\lim \zeta_\epsilon=\infty \text{ a.s.} \nonumber\\
    \text{(ii)  }& \text{There is no double collision between the particles, i.e. }\\
    \mathbb{P}\Big\{\exists t\in (0,&\underset{\epsilon\rightarrow0}{\lim}\zeta_\epsilon): \lambda^i_t = \lambda^{i+1}_t \text{ and } \lambda^j_t = \lambda^{j+1}_t \text{ for some }1\leq i<j\leq n-1\Big\}=0. \nonumber
    \end{align}
  \end{theorem}

  Theorem \ref{thm:existence} is proved in Section \ref{section:proofthm}.
\begin{remark}
  This result states the existence of a unique strong solution $(\lambda^1_t,\dots,\lambda^n_t)_t$ to \eqref{eq:main} defined on the time-interval $\R_+$ for $p\wedge q-n+1\geq \frac{1}{\beta}-1$   and until $\underset{\epsilon\rightarrow0}{\lim}\zeta_\epsilon$ which might be finite if $p\wedge q-n+1< \frac{1}{\beta}-1$. When $\underset{\epsilon\rightarrow0}{\lim}\zeta_\epsilon<+\infty$, then, according to the first step of the proof of assertion $(i)$ of Proposition \ref{prop:multiplecollision} below, the solution can be continuously extended to the closed-time-interval $[0,\underset{\epsilon\rightarrow0}{\lim}\zeta_\epsilon]$. The next step would be to find how to start back from $(\lambda^1_{\underset{\epsilon\rightarrow0}{\lim}\zeta_\epsilon},\dots, \lambda^n_{\underset{\epsilon\rightarrow0}{\lim}\zeta_\epsilon})$ to define a solution on the whole interval $\R_+$.
\end{remark}
The assertion $(i)$ of Theorem \ref{thm:existence} comes from the application of the following proposition for $k=2$.
\begin{proposition}[Multiple collision at zero]
\label{prop:multiplecollision}
Let $k\in\{1,\dots,n\}$. Let the initial condition $\Lambda_0=(\lambda^1_0,\dots, \lambda^n_0)$ be  independent from the Brownian motion $\mathbf{B}$ and such that $0\leq\lambda^1_0\leq\dots\leq\lambda^n_0\leq1$ a.s.
Then,

(i) if   $\lambda^k_0>0$ a.s., \eqref{eq:main} has a local solution  $\Lambda = (\lambda^1_t,\dots,\lambda^n_t)_t$  defined up to a stopping time $\mathcal{T}$ and  $k\beta(p-n+k)\geq2$, then there is no collision of $k$ particles at zero. More precisely,
\[ \mathbb{P}(\mathcal{T}=+\infty,\exists t\geq0 : \lambda_t^{1}+\lambda_t^{2}+\dots+\lambda_t^{k}=0)+\mathbb{P}(\mathcal{T}<+\infty,\underset{t\in[0,\mathcal{T})}{\inf} \lambda_t^{1}+\lambda_t^{2}+\dots+\lambda_t^{k}=0)=0.\]

(ii) if   $\lambda^{n-k+1}_0<1$ a.s., \eqref{eq:main} has a local solution  $\Lambda = (\lambda^1_t,\dots,\lambda^n_t)_t$  defined up to a stopping time $\mathcal{T}$ and  $k\beta(q-n+k)\geq2$,
then there is no collision of $k$ particles at $1$. More precisely,
\[ \mathbb{P}(\mathcal{T}=+\infty,\exists t\geq0 : \lambda_t^{n-k+1}+\lambda_t^{n-k+2}+\dots+\lambda_t^{n}=k)+\mathbb{P}(\mathcal{T}<+\infty,\underset{t\in[0,\mathcal{T})}{\sup} \lambda_t^{n-k+1}+\lambda_t^{n-k+2}+\dots+\lambda_t^{n}=k)=0.\]

\end{proposition}

This proposition is proved in the beginning of Section \ref{section:proofthm}. On top of the existence and uniqueness result, we exhibit in the next proposition the stationary probability measure of the system of SDEs \eqref{eq:main}.

\begin{proposition}\label{prop:stationary}
    Let us assume $\beta(p\wedge q-n+1)>0$. The unique stationary probability measure of the system of SDEs \eqref{eq:main} is $\rho_{inv}$ with density with respect to the Lebesgue measure 
    \[d\rho_{inv}(\lambda^1,\dots,\lambda^n) = \frac{1}{\mathcal{Z}}\times\prod_{i=1}^n\left((\lambda^i)^{\beta\frac{p-n+1}{2}-1}(1-\lambda^i)^{\beta\frac{q-n+1}{2}-1}\prod_{j\neq i}|\lambda^j-\lambda^i|^{\beta/2}\right)\mathds{1}_{\left\{0\leq\lambda^1\leq\dots\leq\lambda^n\leq1\right\}}d\lambda^1\dots d\lambda^n,\]
    where $\mathcal{Z}$ is a normalizing constant.
    
    More precisely, if a solution to \eqref{eq:main} is such that the distribution of $\Lambda_t$ does not depend on $t$, then this distribution is $\rho_{inv}$. When $\Lambda_0$ is distributed according to $\rho_{inv}$ and is independent from the Brownian motion $\mathbf{B}$, the unique solution to \eqref{eq:main} is such that for all $t\in\mathbb{R}_+$, $\Lambda_t$ is distributed according to $\rho_{inv}$, and it is a strong solution.
    \end{proposition}

    \begin{remark}

      The density of $\rho_{inv}$ is the density of the beta-Jacobi ensembles, about which
      the reader will find in \cite{MR2641363}  a documentation. These ensembles are related to the distribution of the eigenvalues of a random matrix model involving a $p\times n$ matrix and a $q\times n$ matrix with the constraints  $n\ge p+q$, and $\beta=1,2$ or $4$ depending on the dimension of the underlying algebra ($\mathbb{R}$, $\mathbb{C}$ or $\mathbb{H}$).
      \end{remark}

\section{Properties of the solutions}
The pathwise uniqueness part of the next Lemma is proved in \cite[Theorem 5.3]{graczyk2014} but we reproduce the proof for the sake of completeness.
\begin{lemma}
  \label{lemma:pathwise_uniqueness}
  The solutions to \eqref{eq:main} are pathwise unique.

  Moreover, if  $Z=(z^1_t,\dots,z^n_t)_t$ and $\tilde Z=(\tilde z^1_t,\dots,\tilde z^n_t)_t$ are two global in time solutions to \eqref{eq:main}with the same driving Brownian motion, then for all $t\geq0$,
  
  \begin{equation}
     \sum_{i=1}^n\mathbb{E}|z^i_{t}-\tilde z^i_{t}| \leq\left(\sum_{i=1}^n\mathbb{E}|z^i_0-\tilde z^i_0|\right)\exp(-2\beta(p+q) t) .\label{eq:contraction} 
 \end{equation}
\end{lemma}
\begin{proof}[Proof of Lemma \ref{lemma:pathwise_uniqueness}]\

  Let $Z=(z^1_t,\dots,z^n_t)_t$ and $\tilde Z=(\tilde z^1_t,\dots ,\tilde z^n_t)_t$ be two solutions to \eqref{eq:main} with the same driving Brownian motion. As for all $i\in\{1,\dots,n\}$, $0<\epsilon<1 $ and $t\ge0$ :
  \begin{align*}
    \int_0^t\mathds{1}_{\{0<z^i_{s}-\tilde z^i_{s}\leq\epsilon\}}\frac{d\langle z^i-\tilde z^i\rangle_s}{z^i_{s}-\tilde z^i_{s}}ds &=4\int_0^t\mathds{1}_{\{0<z^i_{s}-\tilde z^i_{s}\leq\epsilon\}}\frac{\left(\sqrt{z^i_s(1-z^i_s)}-\sqrt{\tilde z^i_s(1-\tilde z^i_s)}\right)^2}{z^i_{s}-\tilde z^i_{s}}ds\\
    &=4\int_0^t\mathds{1}_{\{0<z^i_{s}-\tilde z^i_{s}\leq\epsilon\}}\frac{\left(\sqrt{z^i_s(1-z^i_s)}-\sqrt{\tilde z^i_s(1-\tilde z^i_s)}\right)^2\left(\sqrt{z^i_s(1-z^i_s)}+\sqrt{\tilde z^i_s(1-\tilde z^i_s)}\right)}{(z^i_{s}-\tilde z^i_{s})\left(\sqrt{z^i_s(1-z^i_s)}+\sqrt{\tilde z^i_s(1-\tilde z^i_s)}\right)}ds\\
    &=4\int_0^t\mathds{1}_{\{0<z^i_{s}-\tilde z^i_{s}\leq\epsilon\}}\left|\frac{\left(z^i_s(1-z^i_s)-\tilde z^i_s(1-\tilde z^i_s)\right)\left(\sqrt{z^i_s(1-z^i_s)}-\sqrt{\tilde z^i_s(1-\tilde z^i_s)}\right)}{(z^i_{s}-\tilde z^i_{s})\left(\sqrt{z^i_s(1-z^i_s)}+\sqrt{\tilde z^i_s(1-\tilde z^i_s)}\right)}\right|ds\\
    &=4\int_0^t\mathds{1}_{\{0<z^i_{s}-\tilde z^i_{s}\leq\epsilon\}}\left|\frac{\left(1-z^i_s-\tilde z^i_s\right)\left(\sqrt{z^i_s(1-z^i_s)}-\sqrt{\tilde z^i_s(1-\tilde z^i_s)}\right)}{\left(\sqrt{z^i_s(1-z^i_s)}+\sqrt{\tilde z^i_s(1-\tilde z^i_s)}\right)}\right|ds\\
    &\le4\int_0^t|1-z^i_s-\tilde z^i_s|ds<\infty,
  \end{align*}
  the local time of $z^i-\tilde z^i$ at $0$ is zero (\cite[Lemma 3.3 p.389]{MR1725357}).

  Applying the Tanaka formula then the integration by parts formula to compute $de^{2\beta(p+q)t}|z^i_{t}-\tilde z^i_{t}|$ we get

  \begin{align}
    de^{2\beta(p+q) t}\sum_{i=1}^n|z^i_t-\tilde z^i_t| & =2\sum_{i=1}^ne^{2\beta(p+q) t}\mathrm{sgn}(z^i_t-\tilde z^i_t)\left(\sqrt{z^i_t(1-z^i_t)}-\sqrt{\tilde z^i_t(1-\tilde z^i_t)}\right)dB^i_t\nonumber\\
    &+\beta e^{2\beta(p+q) t}\sum_{i=1}^n\text{sgn}(z^i_t-\tilde z^i_t)\sum_{j\neq i}\left(\frac{z^i_t(1-z^j_t)+z^j_t(1-z^i_t)}{z^i_t-z^j_t}-\frac{\tilde z^i_t(1-\tilde z^j_t)+\tilde z^j_t(1-\tilde z^i_t)}{\tilde z^i_t-\tilde z^j_t}\right)ds\nonumber\\
    &\leq 2\sum_{i=1}^ne^{2\beta(p+q) t}\mathrm{sgn}(z^i_t-\tilde z^i_t)\left(\sqrt{z^i_t(1-z^i_t)}-\sqrt{\tilde z^i_t(1-\tilde z^i_t)}\right)dB^i_t,\label{ippp}
  \end{align}
  where $\text{sgn}(x)=1$ if $x>0$ and $\text{sgn}(x)=-1$ if $x\leq0$. The inequality (\ref{ippp}) comes from the fact that we have for all $i<j$ :
  \begin{align}
     \Biggl[\frac{z^i_s(1-z^j_s)+z^j_s(1-z^i_s)}{z^i_s-z^j_s}&-\frac{\tilde z^i_s(1-\tilde z^j_s)+\tilde z^j_s(1-\tilde z^i_s)}{\tilde z^i_s-\tilde z^j_s}\Biggr](\text{sgn}(z^i_s-\tilde z^i_s)-\text{sgn}(z^j_s-\tilde z^j_s))\nonumber\\
     &  =  2\left[\frac{z^j_s\tilde z^i_s-z^i_s\tilde z^j_s}{(z^i_s-z^j_s)(\tilde z^i_s-\tilde z^j_s)}+\frac{-z^i_sz^j_s\tilde z^i_s+z^i_sz^j_s\tilde z^j_s+z^i_s\tilde z^j_s\tilde z^i_s-z^j_s\tilde z^j_s\tilde z^i_s}{(z^i_s-z^j_s)(\tilde z^i_s-\tilde z^j_s)}\right](\text{sgn}(z^i_s-\tilde z^i_s)-\text{sgn}(z^j_s-\tilde z^j_s))\nonumber\\
    & =  2\left[\frac{z^j_s(\tilde z^i_s-z^i_s)+z^i_s(z^j_s-\tilde z^j_s)}{(z^i_s-z^j_s)(\tilde z^i_s-\tilde z^j_s)}+\frac{z^i_s\tilde z^i_s(\tilde z^j_s- z^j_s)+z^j_s\tilde z^j_s( z^i_s- \tilde z^i_s)}{(z^i_s-z^j_s)(\tilde z^i_s-\tilde z^j_s)}\right](\text{sgn}(z^i_s-\tilde z^i_s)-\text{sgn}(z^j_s-\tilde z^j_s)) \nonumber\\
    & =  2\frac{z^j_s(1-\tilde z^j_s)(\tilde z^i_s-z^i_s)+z^i_s(1-\tilde z^i_s)(z^j_s-\tilde z^j_s)}{(z^i_s-z^j_s)(\tilde z^i_s-\tilde z^j_s)}(\text{sgn}(z^i_s-\tilde z^i_s)-\text{sgn}(z^j_s-\tilde z^j_s)) \nonumber\\
    & =  -2\frac{z^j_s(1-\tilde z^j_s)|\tilde z^i_s-z^i_s|+z^i_s(1-\tilde z^i_s)|z^j_s-\tilde z^j_s| }{(z^i_s-z^j_s)(\tilde z^i_s-\tilde z^j_s)}|\text{sgn}(z^i_s-\tilde z^i_s)-\text{sgn}(z^j_s-\tilde z^j_s)|\leq 0,\label{sgncomputation}
  \end{align}
  
  as the denominator is non-negative. Let the two solutions $Z$ and $\tilde Z$ be respectively defined  on $[0,T]$ if $T<+\infty$ and  $[0,T)$ if $T=+\infty$, and $[0,\tilde T]$ if $\tilde T<+\infty$ and  $[0,\tilde T)$ if $\tilde T=+\infty$,  where $T$ and $\tilde T$ are stopping times for a filtration $(\mathcal{F}_t)_{t\geq0}$ with respect to which $\mathbf{B}$ is a Brownian motion and $Z_0$ is $\mathcal{F}_0$-measurable.

  As for all $i\in\{1,\dots,n\}$:
  \[\E\int_0^te^{4\beta(p+q)s}\left|\sqrt{z^i_s(1-z^i_s)}-\sqrt{\tilde z^i_s(1-\tilde z^i_s)}\right|^2ds<\infty,\]
  the stochastic integrals $\int_0^te^{2\beta(p+q) s}\mathrm{sgn}(z^i_s-\tilde z^i_s)\left(\sqrt{z^i_s(1-z^i_s)}-\sqrt{\tilde z^i_s(1-\tilde z^i_s)}\right)dB^i_s$ have zero expectation so that integrating (\ref{ippp}) on $[0,t\wedge T\wedge \tilde T]$ and taking expectations, we obtain that 

  \begin{equation}\label{eq:precontraction}
    \E e^{2\beta(p+q) t\wedge T\wedge \tilde T}\sum_{i=1}^n|z^i_{t\wedge T\wedge \tilde T}-\tilde z^i_{t\wedge T\wedge \tilde T}|\leq \E\sum_{i=1}^n|z^i_0-\tilde z^i_0|.
  \end{equation}

When $Z_0=\tilde Z_0$ we deduce that for all $t\ge0$
\[ \sum_{i=1}^n\E|z^i_t-\tilde z^i_t|=0,\]
which concludes the proof of pathwise uniqueness.

Taking $T=\tilde T = +\infty$ in \eqref{eq:precontraction}, we have (\ref{eq:contraction}).

\end{proof}

\begin{proof}[Proof of Proposition \ref{prop:stationary}]
    We start by proving the uniqueness of the invariant distribution.
    As the process $(\Lambda_t)_t$ lives in $[0,1]^n$,  $\rho_{inv}$ has a finite first order moment. We can thus apply the second part of Lemma \ref{lemma:pathwise_uniqueness} for two solutions to \eqref{eq:main} starting respectively according to two invariant distributions to deduce that these two invariant distributions are equal.

    The candidate density, obtained by a $t\mapsto\sin^2\left(t\right)$ change of variables from the density proportional to $e^{-2V(\phi^1,\dots,\phi^n)}$ with potential $V$ defined in (\ref{eq:potential}) which is candidate to be stationary for the gradient diffusion (\ref{eq:phigrad}), writes 

    \begin{align}
      f_{inv}(\lambda^1,\dots,\lambda^n) 
      = & \ \frac{2^{n(\beta(q-n+1)-2)}}{\mathcal{Z}'}\times\prod_{i=1}^n\left((\lambda^i)^{\beta\frac{p-n+1}{2}-1}(1-\lambda^i)^{\beta\frac{q-n+1}{2}-1}\prod_{j\neq i}|\lambda^j-\lambda^i|^{\beta/2}\right)\mathds{1}_{\left\{0\leq\lambda^1\leq\dots\leq\lambda^n\leq1\right\}},\label{eq:finv}
  \end{align}
  where $\mathcal{Z}'= \int e^{-2V(\phi^1,\dots,\phi^n)}d\phi^1\dots d\phi^n$.
  The second factor is indeed integrable since the exponents of $\lambda^i$ and $1-\lambda^i$ are bigger than $-1$.

  Let us check that the probability measure $\rho_{inv}$ with density $f_{inv}$ with respect to the Lebesgue measure solves the Fokker-Plack equation in the sense of distributions
  \begin{equation}
    \mathcal{A}^*\rho_{inv} = 0 ,\nonumber
\end{equation}
where $\mathcal{A}$ is the infinitesimal generator associated with the dynamics \eqref{eq:main}:
\begin{align*}
  \mathcal{A} &= \sum_{i=1}^nb_i(\lambda^1,\dots,\lambda^n)\frac{\partial}{\partial \lambda^i}+2\sum_{i=1}^n\lambda^i(1-\lambda^i)\frac{\partial^2}{\partial (\lambda^i)^2},
  \end{align*}
  and where $b_i(\lambda^1,\dots,\lambda^n)=\beta\left[p-(p+q)\lambda^i_t+\sum_{j\neq i}\frac{\lambda^i_t(1-\lambda^j_t)+\lambda^j_t(1-\lambda^i_t)}{\lambda^i_t-\lambda^j_t}\right]$.

    
    For a test function $\phi$ twice continuously differentiable, since $f_{inv}$ vanishes for $\lambda^i=\lambda^{i+1}$ for $i\in\{1,\dots,n-1\}$  and since  the exponents of $\lambda^i$ and $1-\lambda^i$ for $i\in\{1,\dots,n\}$ in $f_{inv}$ are bigger than $-1$ ,  we obtain by integration by parts that for $i\in\{1,\dots,n\}$
    \begin{align}
        &\int_{0\leq \lambda^1\leq\dots\leq \lambda^n\leq1}\lambda^i(1-\lambda^i)\frac{\partial^2\phi}{\partial (\lambda^i)^2}(\lambda^1,\dots,\lambda^n)f_{inv}(\lambda^1,\dots,\lambda^n)d\lambda^1\dots d\lambda^n \nonumber\\
         & = -\int_{0\leq \lambda^1\leq\dots\leq \lambda^n\leq1}\frac{\partial\phi}{\partial \lambda^i}(\lambda^1,\dots,\lambda^n)\left((1-2\lambda^i)f_{inv}(\lambda^1,\dots,\lambda^n)+\lambda^i(1-\lambda^i)\frac{\partial f_{inv}}{\partial \lambda^i}(\lambda^1,\dots,\lambda^n)\right)d\lambda^1\dots d\lambda^n.\nonumber
    \end{align} 
    
   \paragraph{}

    Then we have 
     \begin{align*}
      \int_{0\leq \lambda^1\leq\dots\leq \lambda^n\leq1}\mathcal{A}\phi(\lambda^1,\dots,\lambda^n)d\rho_{inv}(\lambda^1,\dots,\lambda^n) &=\sum_{i=1}^n\int_{0\leq \lambda^1\leq\dots\leq \lambda^n\leq1}\Biggl[(b_i(\lambda^1,\dots,\lambda^n)
      -2(1-2\lambda^i))f_{inv}(\lambda^1,\dots,\lambda^n)\\
      &-2\lambda^i(1-\lambda^i)\frac{\partial f_{inv}}{\partial \lambda^i}(\lambda^1,\dots,\lambda^n)\Biggr]\frac{\partial \phi}{\partial \lambda^i}(\lambda^1,\dots,\lambda^n)d\lambda^1\dots d\lambda^n.
    \end{align*}
    
    As for $0<\lambda^1<\dots<\lambda^n<1$ :
    \begin{align*}
      \frac{\partial f_{inv} }{\partial\lambda^i}(\lambda^1,\dots,\lambda^n)  =\left[\left(\beta\frac{p-n+1}{2}-1\right)\frac{1}{\lambda^i}-\left(\beta\frac{q-n+1}{2}-1\right)\frac{1}{1-\lambda^i}+\beta\sum_{j\neq i}\frac{1}{\lambda^i-\lambda^j}\right]f_{inv}(\lambda^1,\dots,\lambda^n) ,\nonumber
    \end{align*}


    a calculus gives us
    \begin{align*}
        (b_i(\lambda^1,\dots,\lambda^n)&
      -2(1-2\lambda^i))f_{inv}(\lambda^1,\dots,\lambda^n)-2\lambda^i(1-\lambda^i)\frac{\partial f_{inv}}{\partial \lambda^i}(\lambda^1,\dots,\lambda^n)\\ 
      =&f_{inv}(\lambda^1,\dots,\lambda^n)\Biggl[\beta\left[p-(p+q)\lambda^i +\sum_{j\neq i}\frac{\lambda^i(1-\lambda^j)+\lambda^j(1-\lambda^i)}{\lambda^i-\lambda^j}\right]-2(1-2\lambda^i)\\
      &-(\beta(p-n+1)-2)(1-\lambda^i)+(\beta(q-n+1)-2)\lambda^i-2\beta\sum_{j\neq i}\frac{\lambda^i(1-\lambda^i)}{\lambda^i-\lambda^j}\Biggr] \\
        & = 0,
    \end{align*}
    where we used the manipulation (\ref{eq:dual_manipilation}) and $\sum_{j\neq i}\frac{\lambda^i(1-\lambda^j)}{\lambda^i-\lambda^j}=(n-1)\lambda^i+\sum_{j\neq i}\frac{\lambda^i(1-\lambda^i)}{\lambda^i-\lambda^j}$.

    We conclude that $\int_{0\le\lambda^1\le\dots\le\lambda^n\le1}\mathcal{A}\phi(\lambda^1,\dots,\lambda^n)f_{inv}(\lambda^1,\dots,\lambda^n)d\lambda^1\dots d\lambda^n=0$. To deduce the existence of a weak solution to \eqref{eq:main} whose marginals follow the law $\rho_{inv}$, we may apply \cite[Theorem 2.5]{MR3485364}, as soon as
        \[\int_{0\leq \lambda^1\leq\dots\leq \lambda^n\leq1}\sum_{i=1}^n\left\{\lambda^i(1-\lambda^i)+\left|b_i(\lambda^1,\dots,\lambda^n)\right|\right\}d\rho_{inv}(\lambda^1,\dots,\lambda^n)<+\infty.\]
  This property can be proved by remarking that in the definition of (\ref{eq:finv}) of $f_{inv}$, the factor $|\lambda^j-\lambda^i|^{\beta/2}$ makes the singularity of the denominator of the interaction term $\frac{\lambda^i(1-\lambda^j)+\lambda^j(1-\lambda^i)}{\lambda^i-\lambda^j}$ integrable. By pathwise uniqueness proved in Lemma \ref{lemma:pathwise_uniqueness}, this weak solution to \eqref{eq:main} is a strong solution.

    \end{proof}




\section{Proof of Theorem \ref{thm:existence}}\label{section:proofthm}

We start this section by the proof of Proposition \ref{prop:multiplecollision} since this result is crucial in the proof of Theorem \ref{thm:existence}.

\begin{proof}[Proof of Proposition \ref{prop:multiplecollision}]
  \
  Before proving the assertion, let us first check by backwards induction that, whatever $p,q\ge 0$ and $\beta>0$, a  solution to \eqref{eq:main} defined up to a stopping time ${\mathcal T}$ is actually continuous and solves \eqref{eq:main} on the closed time interval $[0,{\cal T}]$ on $\{{\cal T}<\infty\}$. For $k=n$ and for $0\le t<\mathcal{T}$, integrating the SDE verified by the largest particle on $[0,t]$, we have :
  \[\lambda^n_t=\lambda^n_0+\int_0^t2\sqrt{\lambda^n_s(1-\lambda^n_s)}dB^n_s+\beta pt -\beta(p+q)\int_0^t\lambda^n_sds+\beta\sum_{j\neq n}\int_0^t\frac{\lambda^n_s(1-\lambda^j_s)+\lambda^j_s(1-\lambda^n_s)}{\lambda^n_s-\lambda^j_s}ds.\]

As $\lambda^n_t\in[0,1]$ for all $0\le t\le \cal T$, the right-hand side of this last equality is bounded too, and the second and the fourth terms of the right-hand side are continuous in $t$ and bounded, and thus admit a limit in $\cal T -$. Consequently, the term $\beta\sum_{j\neq n}\int_0^t\frac{\lambda^n_s(1-\lambda^j_s)+\lambda^j_s(1-\lambda^n_s)}{\lambda^n_s-\lambda^j_s}ds$ is bounded, and as all its integrands are non-negative, all the integrals in this sum are bounded and non-decreasing, and thus admits a finite limit in $\cal T-$. 

Let $k\in\{1,\dots,n-1\}$ and let us assume that for all $l\in\{k+1,\dots,n\}$, $$\sum_{j\neq l}\int_0^\mathcal{T}\left|\frac{\lambda^l_s(1-\lambda^j_s)+\lambda^j_s(1-\lambda^l_s)}{\lambda^l_s-\lambda^j_s}\right|ds<\infty.$$
For $0\leq t< \cal T$, we have :

\begin{equation*}
  \begin{split}
    \lambda^k_t=&\lambda^k_0+\int_0^t2\sqrt{\lambda^k_s(1-\lambda^k_s)}dB^k_s+\beta pt -\beta(p+q)\int_0^t\lambda^k_sds\\
    &+\beta\sum_{j< k}\int_0^t\frac{\lambda^k_s(1-\lambda^j_s)+\lambda^j_s(1-\lambda^k_s)}{\lambda^k_s-\lambda^j_s}ds-\beta\sum_{j>k}\int_0^t\frac{\lambda^k_s(1-\lambda^j_s)-\lambda^j_s(1-\lambda^k_s)}{\lambda^j_s-\lambda^k_s}ds.
  \end{split}
\end{equation*}
By induction hypothesis, each integral in the last sum of the right-hand side of this equality is bounded, and consequently as each integrand is non-negative, each integral of this term admits a limit in $\cal T-$. Taking this into account and by the same arguments as for $(\lambda^n_t)_t$, the second and the fourth terms of the right-hand side are continuous in $t$ and bounded, and thus admit a limit in $\cal T -$. Consequently the term $\beta\sum_{j< k}\int_0^t\frac{\lambda^k_s(1-\lambda^j_s)+\lambda^j_s(1-\lambda^k_s)}{\lambda^k_s-\lambda^j_s}ds$ is bounded. As all its integrands are non-negative, all the integrals in this sum are bounded and non-decreasing, and thus admits a finite limit in $\cal T-$. 

 We moreover proved that $\sum_{j\neq k}\int_0^\mathcal{T}\left|\frac{\lambda^k_s(1-\lambda^j_s)+\lambda^j_s(1-\lambda^k_s)}{\lambda^k_s-\lambda^j_s}\right|ds<\infty,$ which ends the induction argument.  Therefore, still on $\{\mathcal{T}<\infty\}$, $(\lambda^1_t,\dots,\lambda^n_t)$ admits a limit as $t\to{\mathcal T}-$. Defining $(\lambda^1_{\cal T},\lambda^2_{\cal T},\hdots,\lambda^n_{\cal T})$ as this limit, we conclude that \eqref{eq:main} is satisfied on the closed time interval $[0,{\cal T}]$ on $\{\mathcal{T}<\infty\}$. 

Now, the idea of this proof is to show that the process $\lambda^1+\dots+\lambda^k$ hits zero only when another process defined below hits zero. We then show  that this last process is not smaller than a Cox-Ingersoll-Ross (CIR) process which never hits zero.
  
  Let $s=\frac{1}{2}$, $\epsilon_s=\frac{1}{16}$, and let us define the following functions on $\R$ :
  \begin{eqnarray*}
      E(x)&:=&\e^{-\frac{1}{x}}\mathds{1}_{(0,+\infty)}(x)\\
      F(x)&:=&E(x-s)E(s+\epsilon_s-x)\\
      I(x)&:=&1-\frac{\int_{-\infty}^xF(z)dz}{\int_{-\infty}^{+\infty}F(z)dz}.
  \end{eqnarray*}
  By construction, $E$, $F$ and $I$ are non-negative smooth functions, $F$ has as support $[s,s+\epsilon_s]$ and $I$ is non-increasing, equal to $1$ on $(-\infty,s]$ and to $0$ on $[s+\epsilon_s,+\infty)$.
  Let us moreover remark that 
  \begin{eqnarray*}
      |I'(x)| &=& \frac{F(x)}{\int_{-\infty}^{+\infty}F(z)dz} \ \leq \ \frac{\e^{-\frac{2}{\epsilon_s}}}{\int_{-\infty}^{+\infty}F(z)dz}\\
      |I''(x)| & = & \left|\frac{1}{(s-x)^2}-\frac{1}{(x-s-\epsilon_s)^2}\right|\frac{|I'(x)|}{\int_{-\infty}^{+\infty}F(z)dz}\ <\ +\infty,
  \end{eqnarray*}
  as the last function is continuous and compactly supported on $\R$. We then define 
  \begin{equation}
      f := \ x \in [0,1] \ \mapsto \ I(x)\arcsin^2(\sqrt{x})+(1-I(x))\left(\frac{1}{10}x+\frac{9}{10}\right)\in[0,1].\nonumber
  \end{equation}
  The function $f$  thus coincides with $\arcsin^2(\sqrt{x})$ on $[0,s]$ and with  $\frac{1}{10}x+\frac{9}{10}$ on $[s+\epsilon_s,1]$, is increasing and is twice continuously differentiable on the whole interval $[0,1]$.
  
  Let us define for all $i\in\{1,\dots,n\}$ :  $\psi^i= f(\lambda^i)$. Applying Itô's formula (formally after the stopping time $\inf\{t\ge0:\lambda^1_t(1-\lambda^n_t)=0\}$), we get
  
  \begin{align}
      d\psi^i_t  = &f'(\lambda^i_t)d\lambda^i_t+2 f''(\lambda^i_t)\lambda^i_t(1-\lambda^i_t)dt\nonumber\\
       =& \Biggl\{2\sqrt{\psi^i_t}dB^i_t+\Bigg\{1+\beta(p-q)\sqrt{\psi^i_t}\mathrm{cot}\sqrt{\psi^i_t}+2(\beta(q-n+1)-1)\sqrt{\psi^i_t}\mathrm{cot}\left(2\sqrt{\psi^i_t}\right)\nonumber\\
      &+\beta\sqrt{\psi^i_t}\sum_{j\ne i}\left[\mathrm{cot}\left(\sqrt{\psi^i_t}+\sqrt{\tilde\psi^j_t}\right)+\mathrm{cot}\left(\sqrt{\psi^i_t}-\sqrt{\tilde\psi^j_t}\right)\right]\Bigg\}dt\Biggr\}\mathds{1}_{\left\{\psi^i_t\leq f(s)\right\}}\nonumber\\
      &+\left\{f'(\lambda^i_t)d\lambda^i_t+2 f''(\lambda^i_t)\lambda^i_t(1-\lambda^i_t)dt\right\}\mathds{1}_{\left\{\psi^i_t> f(s)\right\}},\nonumber
  \end{align}
  where we denote $\tilde{\psi}^j_t = \arcsin^2(\sqrt{f^{-1}(\psi^j_t)})$ which is always defined as $f:[0,1]\mapsto[0,1]$ is increasing and thus injective. Let us note that $\tilde{\psi}^j_t$ coincides with $\psi^j_t$ when $\psi^j_t\leq f(s)$. 

  For all $k\in\{1,\dots,n\}$, starting with
  \begin{eqnarray}
  \sigma^{k,0} &=&0,\nonumber\\
  \sigma^{k,1} &=&\inf\left\{t \in[0,\mathcal{T}): \psi_t^k\leq f\left(\frac{s}{2}\right)\right\}\mathds{1}_{\left\{\psi^k_0 \ge f(s) \right\}}, \nonumber
  \end{eqnarray}
  let us define inductively for $j\in\mathbb{N}$
  \begin{eqnarray}
     \sigma^{k,2j+1}&=&\inf\left\{t\in(\sigma^{k,2j},\mathcal{T}) : \psi_t^k\leq f\left(\frac{s}{2}\right)\right\}, \nonumber\\
     \sigma^{k,2j+2} & = & \inf\left\{t\in(\sigma^{k,2j+1},\mathcal{T}): \psi_t^k\geq f(s)\right\} , \nonumber
  \end{eqnarray}
  with the convention $\inf\emptyset=\mathcal{T}$. We can then rewrite :

  \begin{align}
    d\psi^i_t    =& \Biggl\{2\sqrt{\psi^i_t}dB^i_t+\Bigg\{1+\beta(p-q)\sqrt{\psi^i_t}\mathrm{cot}\sqrt{\psi^i_t}+2(\beta(q-n+1)-1)\sqrt{\psi^i_t}\mathrm{cot}\left(2\sqrt{\psi^i_t}\right)\nonumber\\
    &+\beta\sqrt{\psi^i_t}\sum_{j\ne i}\left[\mathrm{cot}\left(\sqrt{\psi^i_t}+\sqrt{\tilde\psi^j_t}\right)+\mathrm{cot}\left(\sqrt{\psi^i_t}-\sqrt{\tilde\psi^j_t}\right)\right]\Bigg\}dt\Biggr\}\mathds{1}_{\left\{t\in\bigcup_{l\in\N}[\sigma^{i,2l+1},\sigma^{i,2l+2})\right\}}\nonumber\\
    &+\left\{f'(\lambda^i_t)d\lambda^i_t+2 f''(\lambda^i_t)\lambda^i_t(1-\lambda^i_t)dt\right\}\mathds{1}_{\left\{t\in\bigcup_{l\in\N}[\sigma^{i,2l},\sigma^{i,2l+1})\right\}}.\label{eq:psi-time}
\end{align}
  
  We moreover define for all $i\in\{1,\dots,n\}$ and $k\ge i$,
  
  \begin{align*}
      \t_i^k(t) &=-\Biggl\{\half \beta(p-q)\left(\mathrm{cot}\sqrt{\psi^i_t}-\frac{1}{\sqrt{\psi^i_t}}\right)+(\beta(q-n+1)-1)\left(\mathrm{cot}\left(2\sqrt{\psi^i_t}\right)-\frac{1}{2\sqrt{\psi^i_t}}\right)\\
      &+\frac{\beta}{2}\sum_{j\ne i}\Biggl[\mathrm{cot}\left(\sqrt{\psi^i_t}+\sqrt{\tilde\psi^j_t}\right)-\frac{1}{\sqrt{\psi^i_t}+\sqrt{\tilde\psi^j_t}}+\mathrm{cot}\left(\sqrt{\psi^i_t}-\sqrt{\tilde\psi^j_t}\right)-\frac{1}{\sqrt{\psi^i_t}-\sqrt{\tilde\psi^j_t}}\Biggr]\Biggr\}\mathds{1}_{\left\{t\in\bigcup_{l\in\N}[\sigma^{k,2l+1},\sigma^{k,2l+2})\right\}},
  \end{align*}
  and  $\Theta^k(t) = (\t_1^k(t),\dots,\t_k^k(t),0,\dots,0)$ for $k\in\{1,\dots,n\}$.
  
  The function $\cot$ is continuous on $(0,\pi)$, $\cot(x)-\frac{1}{x}\underset{x\rightarrow0}{\sim}-\frac{x}{3}$, and the function $x\mapsto \cot(x)-\frac{1}{x}$ is decreasing on $[0,\pi)$. Thus, as  $0\leq\psi^i_t\leq\psi^k_t\leq f(s)<\frac{\pi^2}{4}$ on $\bigcup_{l\in\N}[\sigma^{k,2l+1},\sigma^{k,2l+2})\subset \bigcup_{l\in\N}[\sigma^{i,2l+1},\sigma^{i,2l+2})$, we have  for all $t\in\bigcup_{l\in\N}[\sigma^{k,2l+1},\sigma^{k,2l+2})$ : 
  
  \begin{align}   
      |\t_i^k(t)^2| &\leq \frac{\beta^2(p-q)^2}{4}\mathrm{cot}^2\left(\sqrt{f(s)}\right)+(\beta(q-n+1)-1)^2\mathrm{cot}^2\left(2\sqrt{f(s)}\right)+\frac{\beta^2}{4}\sum_{j\ne i}\left[\mathrm{cot}^2\left(2\sqrt{f(s)}\right)+\mathrm{cot}^2\left(\sqrt{f(s)}\right)\right]\nonumber\\
      & <+\infty.\label{ineq:coll_girs}
  \end{align}

  Let us now prove the assertion  by first remarking that by construction
  \[\mathbb{P}(\exists t\geq0 : \lambda_t^{1}+\lambda_t^{2}+\dots+\lambda_t^{k}=0)=\mathbb{P}(\exists t\geq0 : \psi_t^{1}+\psi_t^{2}+\dots+\psi_t^{k}=0). \] We proceed by backward induction on $k$ noticing that $\min_{k\le l \le n}l\beta(p-n+l)= k\beta(p-n+k)$. The idea is to show that the process $\psi^1+\dots+\psi^k$ is on $t\in \bigcup_{l\in\N}[\sigma^{k,2l+1},\sigma^{k,2l+2})$ not smaller than a CIR process which never hits zero.

   For $k=n$ let us consider for all $t\geq0$ :
   \[Z^n(t) = \exp\left\{\int_0^t\Theta^n(u)d\mathbf{B}_u-\half\int_0^t||\Theta^n(u)||^2du\right\}.\]
   We have thanks to the inequality (\ref{ineq:coll_girs}) :
   \[\E\left[\exp\left\{\frac{1}{2}\int_0^{t}||\Theta^n(u)||^2du\right\}\right]<\infty \ \text{for all} \ t\geq0.\]
  
       Then, according to Novikov's criterion (see for instance \cite[Proposition 5.12 p.198]{MR1121940}), $Z^n$ is a $\P$-martingale, and $\E[Z^n(t)]=1$. Consequently, recalling that $\mathcal{F}_t=\sigma\left((\l^1_0,\dots,\l^n_0),(\mathbf{B}_s)_{s\leq t}\right)$ and defining $\Q^n$ such that
       \[\frac{d\Q^n}{d\P}_{|\mathcal{F}_t}=Z(t)\]
       and for all $i\in\{1,\dots,n\}$,
       \[\tilde{B}^i_t = B^i_t -\int_0^{t}\t_i^n(s)ds,\]
       $\tilde{\mathbf{B}}=(\tilde{B}^1_t,\dots,\tilde{B}^n_t)_t$ is a $\Q^n$-Brownian motion according to the Girsanov Theorem (see for instance \cite[Proposition 5.4 p.194]{MR1121940}).
  
       Thus, (\refeq{eq:psi-time}) can be rewritten  in terms of $\tilde{\mathbf{B}}$ as  
       \begin{align}
          d\psi^i_t  =& \Biggl\{2\sqrt{\psi^i_t}d\tilde B^i_t+\Bigg[\beta(p-n+1)+2\beta\sum_{j\ne i}\frac{\psi^i_t}{\psi^i_t-\psi^j_t}\Bigg]dt\Biggr\}\mathds{1}_{\left\{t\in\bigcup_{l\in\N}[\sigma^{n,2l+1},\sigma^{n,2l+2})\right\}}\nonumber\\
          &+\left\{f'(\lambda^i_t)d\lambda^i_t+2 f''(\lambda^i_t)\lambda^i_t(1-\lambda^i_t)dt\right\}\mathds{1}_{\left\{t\in\bigcup_{l\in\N}[\sigma^{n,2l},\sigma^{n,2l+1})\right\}} \text{ for all } i\in\{1,\dots,n\}.\nonumber
      \end{align}

   We then have for all $t\in\bigcup_{l\in\N}[\sigma^{n,2l+1},\sigma^{n,2l+2})\cap[0,\cal T)$ :
   \begin{align*}
       d\left(\psi^1_t+\dots+\psi^n_t\right) &=  2\sum_{i=1}^n\sqrt{\psi^i_t}d\tilde B^i_t+n\beta pdt\\
       & = 2\sqrt{\psi^1_t+\dots+\psi^n_t}dW^n_t +n\beta pdt,
   \end{align*}
  where $W^n$ is the Brownian motion defined by :\\
  \begin{equation*}
      \begin{split}
         dW^n_t=&\mathds{1}_{\left\{t\in[0,\mathcal{T})\cap \left(\bigcup_{l\in\N}[\sigma^{n,2l+1},\sigma^{n,2l+2})\right)\right\}}\sum_{i=1}^n\left(\mathds{1}_{\left\{\sum_{j=1}^n\psi^j_t\neq0\right\}}\frac{\sqrt{\psi^i_t}}{\sqrt{\sum_{j=1}^n\psi^j_t}}+\mathds{1}_{\left\{\sum_{j=1}^n\psi^j_t=0\right\}}\frac{1}{\sqrt{n}}\right)d\tilde B^i_t\\
         &+\left(\mathds{1}_{\{t\geq\mathcal{T}\}}+\mathds{1}_{\left\{t\in[0,\mathcal{T})\cap \left(\bigcup_{l\in\N}[\sigma^{n,2l},\sigma^{n,2l+1})\right)\right\}}\right)\frac{1}{\sqrt{n}}\sum_{i=1}^nd\tilde B^i_t.
      \end{split}
  \end{equation*}
   According to Lemma \ref{lamberton}, the process $\left(\psi^1_t+\dots+\psi^n_t\right)_t$ coincides with CIR processes on each interval of $\bigcup_{l\in\N}[\sigma^{n,2l+1},\sigma^{n,2l+2})$ and each of those CIRs are defined  globally, are positive on $\R_+$ and especially at $\cal T$  a.s. when $\cal T<\infty$.   If $n\beta p\ge2$, we can thus conclude for $k=n$. 

 Let us now assume that for some $k\in\{1,\dots,n-1\}$, $k\beta(p-n+k)\geq2$ and that the desired property holds for the sum of the $k+1$ smallest particles. Since $(k+1)\beta(p-n+k+1)>k\beta(p-n+k)\geq2$, the probability of the $k+1$ smallest particles to collide at zero is null. Consequently, the probability for $k$ smallest particles to collide at zero is the probability for exactly the $k$ smallest particles to collide at zero and not the $k+1$th which stays away from zero. We thus have :
  
     \begin{multline*}
      \mathbb{P}\left(\mathcal{T}<\infty,\underset{t\in[0,\mathcal{T})}{\inf} (\lambda_t^{1}+\lambda_t^{2}+\dots+\lambda_t^{k+1})=0\right)+\mathbb{P}\left(\mathcal{T}=\infty,\exists t\geq0 : \lambda_t^{1}+\lambda_t^{2}+\dots+\lambda_t^{k+1}=0\right)\\
      =\mathbb{P}\left(\mathcal{T}<\infty,\underset{t\in[0,\mathcal{T})}{\inf} (\psi_t^{1}+\psi_t^{2}+\dots+\psi_t^{k+1})=0\right)+\mathbb{P}\left(\mathcal{T}=\infty,\exists t\geq0 : \psi_t^{1}+\psi_t^{2}+\dots+\psi_t^{k+1}=0\right)=0.
   \end{multline*}
    Then, 
    \begin{eqnarray}
      \mathbb{P}\left(\mathcal{T}=\infty,\exists t\geq0 : \psi_t^{1}+\psi_t^{2}+\dots+\psi_t^{k}=0\right)&=&\underset{\epsilon\downarrow0}\lim\mathbb{P}\Bigg(\mathcal{T}=\infty,\exists t\geq0 : \psi_t^{1}+\psi_t^{2}+\dots+\psi_t^{k} = 0 \text{ and }\psi_t^{k+1}-\psi_t^k\geq\epsilon\Bigg)  \nonumber\\
      &=&\underset{\epsilon\downarrow0}\lim\mathbb{P}\Bigg(\mathcal{T}=\infty,\exists t\geq0 : \psi_t^{1}+\psi_t^{2}+\dots+\psi_t^{k}+\mathds{1}_{\{\psi_t^{k+1}-\psi_t^k<\epsilon\}}=0\Bigg),  \nonumber
  \end{eqnarray}
  and
   \begin{eqnarray}
       \mathbb{P}\Big(\mathcal{T}<\infty,\underset{t\in[0,\mathcal{T})}{\inf}(\psi^{1}_t+\dots+\psi^{k}_t)=0\Big)
       = \underset{\epsilon\downarrow0}\lim\mathbb{P}\Bigg(\mathcal{T}<\infty,\underset{t\in[0,\mathcal{T})}{\inf} (\psi_t^{1}+\dots+\psi_t^{k}+\mathds{1}_{\{\psi_t^{k+1}-\psi_t^k<\epsilon\}})=0\Bigg).  \nonumber
   \end{eqnarray}
   
   For $\epsilon>0$, starting with
   \begin{eqnarray}
   \tau^0_\epsilon &=&\inf\{t \in[0,\mathcal{T}): \psi_t^{k+1}-\psi_t^k\geq\epsilon\}, \nonumber
   \end{eqnarray}
   let us define inductively for $j\in\mathbb{N}$
   \begin{eqnarray}
   \tau_\epsilon^{2j+1} & = & \inf\{t\in(\tau_\epsilon^{2j},\mathcal{T}) : \psi_t^{k+1}-\psi_t^k\leq\frac{\epsilon}{2}\} , \nonumber\\
   \tau_\epsilon^{2j+2}&=&\inf\{t\in(\tau_\epsilon^{2j+1},\mathcal{T}): \psi_t^{k+1}-\psi_t^k\geq\epsilon\}, \nonumber
   \end{eqnarray}
   with the convention $\inf\emptyset=\mathcal{T}$. As the function $t\rightarrow\psi_t^{k+1}-\psi_t^k$ is continuous on $[0,\mathcal{T}]$ when $\cal T<\infty$ and on $[0,\mathcal{T})$ when $\cal T=\infty$ , 
   \[\tau_\epsilon^j\underset{j\rightarrow +\infty}{\longrightarrow}\mathcal{T} \text{ and } \max\{j\in\N:\tau^j_\epsilon<\cal T\}<+\infty \text{ a.s. on }\{\cal T<\infty\}.\]

   As  $\psi_t^{k+1}-\psi_t^{k}<\epsilon$ on $[0,\tau^0_\epsilon)$ and $[\tau^{2j+1}_\epsilon,\tau^{2j+2}_\epsilon)$ for $j\in\mathbb{N}$,
   \begin{eqnarray}
       \Bigg\{\mathcal{T}<\infty &,& \underset{t\in[0,\mathcal{T})}{\inf} (\psi_t^{1}+\dots+\psi_t^{k}+\mathds{1}_{\{\psi_t^{k+1}-\psi_t^k<\epsilon\}})=0\Bigg\} =\left\{\mathcal{T}<\infty,\exists j\in\mathbb{N} ,\underset{t\in[\tau^{2j}_\epsilon,\tau^{2j+1}_\epsilon)}{\inf}(\psi_t^{1}+\dots+\psi_t^{k})=0 \right\}\nonumber\\
      \text{ and } \Bigg\{\mathcal{T}=\infty&,&\exists t\geq0 : \psi_t^{1}+\psi_t^{2}+\dots+\psi_t^{k}+\mathds{1}_{\{\psi_t^{k+1}-\psi_t^k<\epsilon\}} = 0\Bigg\} \nonumber\\ &&=\Bigg\{\mathcal{T}=\infty,\exists j\in\mathbb{N} ,\exists t\in[\tau^{2j}_\epsilon,\tau^{2j+1}_\epsilon) : \psi_t^{1}+\psi_t^{2}+\dots+\psi_t^{k} = 0 \Bigg\},\nonumber
   \end{eqnarray}

   it is enough to check that
   \begin{eqnarray}
       &&\mathbb{P}\left(\left\{\mathcal{T}<+\infty, \exists j\in\mathbb{N} ,\underset{t\in[\tau^{2j}_\epsilon,\tau^{2j+1}_\epsilon)}{\inf}(\psi_t^{1}+\dots+\psi_t^{k})=0 \right\}\right)\nonumber\\
       &+& \mathbb{P}\left(\Bigg\{\mathcal{T}=+\infty, \exists j\in\mathbb{N} ,\exists t\in[\tau^{2j}_\epsilon,\tau^{2j+1}_\epsilon) : \psi_t^{1}+\psi_t^{2}+\dots+\psi_t^{k} = 0 \Bigg\}\right) = 0. \nonumber
   \end{eqnarray}
   Moreover, these events can only happen on $t\in\bigcup_{l\in\N}[\sigma^{k,2l+1},\sigma^{k,2l+2})$ so it is in fact  enough to check that
   \begin{eqnarray}
       &&\mathbb{P}\left(\left\{\mathcal{T}<+\infty, \exists j,l\in\mathbb{N} ,\underset{t\in[\tau^{2j}_\epsilon,\tau^{2j+1}_\epsilon)\cap[\sigma^{k,2l+1},\sigma^{k,2l+2})}{\inf}(\psi_t^{1}+\dots+\psi_t^{k})=0 \right\}\right)\nonumber\\
       &+& \mathbb{P}\left(\Bigg\{\mathcal{T}=+\infty, \exists j,l\in\mathbb{N} ,\exists t\in[\tau^{2j}_\epsilon,\tau^{2j+1}_\epsilon)\cap[\sigma^{k,2l+1},\sigma^{k,2l+2}) : \psi_t^{1}+\psi_t^{2}+\dots+\psi_t^{k} = 0 \Bigg\}\right) = 0. \qquad  \label{eq:kpartpsiP}
   \end{eqnarray}
   Let us consider for all $t\geq0$
   \[Z^k(t) = \exp\left\{\int_0^t\Theta^k(u)d\mathbf{B}_u-\half\int_0^t||\Theta^k(u)||^2du\right\}.\]
   We have thanks to the inequality (\ref{ineq:coll_girs}) :
   \[\E\left[\exp\left\{\frac{1}{2}\int_0^{t}||\Theta^k(u)||^2du\right\}\right]<\infty \ \text{for all} \ t\geq0.\]
  
       Then, according to Novikov's criterion (see for instance \cite[Proposition 5.12 p.198]{MR1121940}), $Z^k$ is a $\P$-martingale, and $\E[Z^k(t)]=1$. Consequently, recalling that $\mathcal{F}_t=\sigma\left((\l^1_0,\dots,\l^n_0),(\mathbf{B}_s)_{s\leq t}\right)$ and defining $\Q^k$ such that
       \[\frac{d\Q^k}{d\P}_{|\mathcal{F}_t}=Z(t)\]
       and for all $i\in\{1,\dots,k\}$,
       \[\tilde{B}^i_t = B^i_t -\int_0^{t}\t_i^k(s)ds,\]
       $\tilde{\mathbf{B}}^{(k)}=(\tilde{B}^1_t,\dots,\tilde{B}^k_t, B^{k+1}_t,\dots,B^n_t)_t$ is a $\Q^k$-Brownian motion according to the Girsanov Theorem (see for instance \cite[Proposition 5.4 p.194]{MR1121940}). The equality \eqref{eq:kpartpsiP} is still true under the probability $\mathbb{Q}^k$.

       We  define the Brownian motion $W^k$ the following way :\\
       \begin{equation*}
           \begin{split}
              dW^k_t=&\mathds{1}_{\left\{t\in[0,\mathcal{T})\cap \left(\bigcup_{l\in\N}[\sigma^{k,2l+1},\sigma^{k,2l+2})\right)\right\}}\sum_{i=1}^k\left(\mathds{1}_{\left\{\sum_{j=1}^k\psi^j_t\neq0\right\}}\frac{\sqrt{\psi^i_t}}{\sqrt{\sum_{j=1}^k\psi^j_t}}+\mathds{1}_{\left\{\sum_{j=1}^k\psi^j_t=0\right\}}\frac{1}{\sqrt{k}}\right)d\tilde B^i_t\\
              &+\left(\mathds{1}_{\{t\geq\mathcal{T}\}}+\mathds{1}_{\left\{t\in[0,\mathcal{T})\cap \left(\bigcup_{l\in\N}[\sigma^{k,2l}_s,\sigma^{k,2l+1})\right)\right\}}\right)\frac{1}{\sqrt{k}}\sum_{i=1}^kd\tilde B^i_t.
           \end{split}
       \end{equation*}

   We have for  all $t\in[\tau_\epsilon^{2j},\tau_\epsilon^{2j+1})\cap\left(\bigcup_{l\in\N}[\sigma^{k,2l+1},\sigma^{k,2l+2})\right)$, $\psi_t^{k+1}-\psi_t^{k}>\frac{\epsilon}{2}$ and
   \begin{eqnarray}
     d(\psi^{1}_t+\dots+\psi^k_t)  &=&  2\sqrt{\psi^{1}_t+\dots+\psi^k_t}dW^k_t+k\beta(p-n+k)dt+2\beta\sum_{i=1}^k\sum_{j= k+1}^n\frac{\psi^i_t}{\psi^i_t-\psi^j_t}dt\nonumber\\
     &\geq&2\sqrt{\psi^{1}_t+\dots+\psi^k_t}dW^k_t+k\beta(p-n+k)dt-\frac{4}{\epsilon}(n-k)\beta\left(\sum_{i=1}^k\psi^i_t\right)dt. \label{kbiggerthancir}
     \end{eqnarray}

     We can then define on $\{\tau^{2j}_\epsilon\vee \sigma^{k,2l+1}<\infty\}$, for each interval  $[\tau_\epsilon^{2j}, \tau_\epsilon^{2j+1})\cap[\sigma^{k,2l+1},\sigma^{k,2l+2})$ the process $ r^{j,l}$   by $r^{j,l}_0 =\left(\lambda^{1}_{\tau_\epsilon^{2j}\vee\sigma^{k,2l+1} }+\dots+\lambda^k_{\tau_\epsilon^{2j}\vee\sigma^{k,2l+1}}\right)\mathds{1}_{\{\tau_\epsilon^{2j}\vee\sigma^{k,2l+1}<\mathcal{T}\}}+\mathds{1}_{\{\tau_\epsilon^{2j}\vee\sigma^{k,2l+1}=\mathcal{T}\}} $ and  for all $t\geq0$ :
     \begin{eqnarray}
       d r^{j,l}_t &=&2\sqrt{ r^{j,l}_t}dW^k_{t+\tau_\epsilon^{2j}\vee\sigma^{k,2l+1}}+\left[-\frac{4}{\epsilon}\beta(n-k) r^{j,l}_t+k\beta(p-n+k)\right]dt \nonumber\\
        &=&2\sqrt{ r^{j,l}_t}d(W^k_{t+\tau_\epsilon^{2j}\vee\sigma^{k,2l+1}}-W^k_{\tau_\epsilon^{2j}\vee\sigma^{k,2l+1}}+W^k_{\tau_\epsilon^{2j}\vee\sigma^{k,2l+1}})+\left[-\frac{4}{\epsilon}\beta(n-k) r^{j,l}_t+k\beta(p-n+k)\right]dt \nonumber\\
        &=&2\sqrt{ r^{j,l}_t}d\tilde W^k_{t}+\left[-\frac{4}{\epsilon}\beta(n-k) r^{j,l}_t+k\beta(p-n+k)\right]dt \nonumber
     \end{eqnarray}
     where conditionally on $\{\tau^{2j}_\epsilon\vee\sigma^{k,2l+1}<\infty\}$, by strong Markov property, $(\tilde W^k_t = W^k_{t+\tau_\epsilon^{2j}\vee\sigma^{k,2l+1}}-W^k_{\tau_\epsilon^{2j}\vee\sigma^{k,2l+1}})_{t\geq0}$ is a Brownian motion independent from $\mathcal{F}_{\tau_\epsilon^{2j}\vee\sigma^{k,2l+1}}$.
   Conditionally on $\{\tau_\epsilon^{2j}\vee\sigma^{k,2l+1}<\infty\}$, the process  $ r^{j,l}$ is a CIR process defined globally in time  according to  Lemma \ref{lamberton} with $a=k\beta(p-n+k)$ and $\sigma=2$ which satisfy $a\geq\frac{\sigma^2}{2}$, and it stays positive on $\mathbb{R}_+$.
     
    This together with (\ref{kbiggerthancir}) and Theorem \ref{Ikeda} give that for all $t\in[\tau_\epsilon^{2j}, \tau_\epsilon^{2j+1})\cap[\sigma^{k,2l+1},\sigma^{k,2l+2})$,
     \begin{equation}
         \psi^{1}_t+\dots+\psi^k_t\geq r_{t-\tau_\epsilon^{2j}\vee\sigma^{k,2l+1}}^{j,l}. \nonumber
     \end{equation}
     
      We can thus conclude the proof of $(i)$.
      
      To prove $(ii)$, we just need to apply $(i)$ to the process $(1-\lambda^n_t,\dots,1-\lambda^1_t)_t$ which satisfies $(J(q,p))$ according to Remark \ref{rmk:dual}.
  \end{proof}

\begin{proof}[Proof of Theorem \ref{thm:existence}]\
    By Remark \ref{rmk:dual}, it is enough to prove Theorem \ref{thm:existence} with the additionnal assumption $p\geq q$. Indeed, if the theorem is true under this additionnal assumption, then, if $q>p$, we apply the theorem to the system of SDEs (J(q,p)) instead of (J(p,q)), which allows to build a solution to (J(p,q)) using Remark \ref{rmk:dual}, and this solution still verifies the assertion $(ii)$ of the theorem. By Lemma \ref{lemma:pathwise_uniqueness}, the we can then conclude of the existence and uniqueness of a strong solution to \eqref{eq:main}, and this solution verifies $(ii)$. We thus make the assumption $p\ge q$  in the rest of the proof.

    As explained in the introduction, the main difficulty in proving this result comes from the fact that we have to deal with  both the singularity related to the interaction between particles and the singularity related to the edges (zero and one) when two particles hit zero (or one) at the same time. For $\epsilon>0$, our method precisely consists in separating these difficulties by defining four new systems of SDEs (\ref{eq:Jhat}), (\ref{eq:Jtilde}), (\ref{eq:Jcheck}) and (\ref{eq:Jbar}) which each remove three types of singularity and coincide with \eqref{eq:main} on domains that cover $\{t\geq0: 0\leq\lambda^1_t\leq\dots\leq\lambda^n_t\leq1\text{ and } (\lambda^1_t\geq\epsilon\text{ or }\lambda^2_t-\lambda^1_t\geq\epsilon) \text{ and } (\lambda^n_t\leq1-\epsilon\text{ or }\lambda^n_t-\lambda^{n-1}_t\geq\epsilon)\}$. This allows us to build a solution to \eqref{eq:main} by piecing together solutions to (\ref{eq:Jhat}), (\ref{eq:Jtilde}), (\ref{eq:Jcheck}) and (\ref{eq:Jbar}).
  
    Let us consider in this proof the Brownian motion $\mathbf{B}=(B^1_t,\dots,B^n_t)_{t}$, $\mathcal{F}_t=\sigma\left((\lambda^1_0,\dots,\lambda^n_0), \left(\mathbf{B}_s \right)_{s \le t} \right)$ for all $t\geq0$, and the system of SDEs defined by \eqref{eq:main}.
    Let us define for all $\epsilon>0$ the following SDEs :
    
    \begin{equation}
    \label{eq:Jhat}
    \begin{split}
       \forall i\in\{1,\dots,n\}, d \hat\lambda^{i,\epsilon}_t =\ & 2\sqrt{\hat\lambda^{i,\epsilon}_t(1-\hat\lambda^{i,\epsilon}_t)}dB^i_t \\
       &+\Biggl\{(1-2\hat\lambda^{i,\epsilon}_t)\left[2-0\vee\frac{2\sqrt{2}}{\sqrt{\epsilon}}\left(\sqrt{1-\hat\lambda^{i,\epsilon}_t}-\frac{\sqrt{\epsilon}}{2\sqrt{2}}\right)\wedge1 -0\vee\frac{2\sqrt{2}}{\sqrt{\epsilon}}\left(\sqrt{\hat\lambda^{i,\epsilon}_t}-\frac{\sqrt{\epsilon}}{2\sqrt{2}}\right)\wedge1 \right]\\
        &+\beta\left[p-(p+q)\hat\lambda^{i,\epsilon}_t+\sum_{j\neq i}\frac{\hat\lambda^{i,\epsilon}_t(1-\hat\lambda^{j,\epsilon}_t)+\hat\lambda^{j,\epsilon}_t(1-\hat\lambda^{i,\epsilon}_t)}{\hat\lambda^{i,\epsilon}_t-\hat\lambda^{j,\epsilon}_t}\right]\Biggr\}dt \\
        &0\leq \hat\lambda^{1,\epsilon}_t<\dots< \hat\lambda^{n,\epsilon}_t\leq1, \text{ a.s.,  } dt-a.e.
    \end{split}\tag{$\hat{J} _\epsilon$}
    \end{equation}
  
    \begin{equation}
        \label{eq:Jtilde}
  \left\{\begin{aligned}
      d\tilde\lambda^{1,\epsilon}_t = \ & 2\sqrt{\tilde\lambda^{1,\epsilon}_t(1-\tilde\lambda^{1,\epsilon}_t)}dB^1_t+\beta\left[p-n+1-(p+q)\tilde\lambda^{1,\epsilon}_t-2\frac{(1-\tilde\lambda^{1,\epsilon}_t)\wedge\frac{\epsilon}{2}}{\frac{\epsilon}{2}}\sum_{j\neq1}\frac{\tilde\lambda^{1,\epsilon}_t(1-\tilde\lambda^{j,\epsilon}_t)}{(\tilde\lambda^{j,\epsilon}_t-\tilde\lambda^{1,\epsilon}_t\wedge\epsilon)\vee\epsilon}\right]dt \\
  \forall i\in\{2,\dots, n \}\text{, }  d \tilde\lambda^{i,\epsilon}_t =&\ 2\sqrt{\tilde\lambda^{i,\epsilon}_t(1-\tilde\lambda^{i,\epsilon}_t)}dB^i_t\\
  &+\Biggl\{(1-2\tilde\lambda^{i,\epsilon}_t)\left[2-0\vee\frac{2\sqrt{2}}{\sqrt{\epsilon}}\left(\sqrt{1-\tilde\lambda^{i,\epsilon}_t}-\frac{\sqrt{\epsilon}}{2\sqrt{2}}\right)\wedge1 -0\vee\frac{2}{\sqrt{\epsilon}}\left(\sqrt{\tilde\lambda^{i,\epsilon}_t}-\frac{\sqrt{\epsilon}}{2}\right)\wedge1 \right]\\
  &+\beta\left[p-n+1-(p+q)\tilde\lambda^{i,\epsilon}_t+2\frac{(1-\tilde\lambda^{i,\epsilon}_t)\wedge\frac{\epsilon}{2}}{\frac{\epsilon}{2}}\frac{\tilde\lambda^{i,\epsilon}_t(1-\tilde\lambda^{1,\epsilon}_t)}{(\tilde\lambda^{i,\epsilon}_t-\tilde\lambda^{1,\epsilon}_t)\vee\epsilon}+2\sum_{j\geq2,j\neq i}\frac{\tilde\lambda^{i,\epsilon}_t(1-\tilde\lambda^{j,\epsilon}_t)}{\tilde\lambda^{i,\epsilon}_t-\tilde\lambda^{j,\epsilon}_t}\right]\Biggl\}dt\\
  &0\leq \tilde\lambda^{1,\epsilon}_t\leq1 \text{ and } 0\leq \tilde\lambda^{2,\epsilon}_t<\dots< \tilde\lambda^{n,\epsilon}_t\leq1, \text{ a.s.,  } dt-a.e.
  \end{aligned}\right.
  \tag{$\tilde{J} _\epsilon$}
  \end{equation}

  \begin{equation}
    \label{eq:Jcheck}
\left\{\begin{aligned}
  d\check\lambda^{n,\epsilon}_t = \ & 2\sqrt{\check\lambda^{n,\epsilon}_t(1-\check\lambda^{n,\epsilon}_t)}dB^n_t\\
  &+\beta\left[p-n+1-(p+q)\check\lambda^{n,\epsilon}_t+2\frac{\check\lambda^{n,\epsilon}_t\wedge\frac{\epsilon}{2}}{\frac{\epsilon}{2}}\sum_{j\neq n}\frac{\check\lambda^{n,\epsilon}_t(1-\check\lambda^{j,\epsilon}_t)}{(\check\lambda^{n,\epsilon}_t-\check\lambda^{j,\epsilon}_t\wedge\epsilon)\vee\epsilon}\right]dt \\
\forall i\in\{1,\dots, n-1 \}\text{, }  d \check\lambda^{i,\epsilon}_t =&\ 2\sqrt{\check\lambda^{i,\epsilon}_t(1-\check\lambda^{i,\epsilon}_t)}dB^i_t\\
&+\Biggl\{(1-2\check\lambda^{i,\epsilon}_t)\left[2-0\vee\frac{2}{\sqrt{\epsilon}}\left(\sqrt{1-\check\lambda^{i,\epsilon}_t}-\frac{\sqrt{\epsilon}}{2}\right)\wedge1 -0\vee\frac{2\sqrt{2}}{\sqrt{\epsilon}}\left(\sqrt{\check\lambda^{i,\epsilon}_t}-\frac{\sqrt{\epsilon}}{2\sqrt{2}}\right)\wedge1 \right]\\
&+\beta\Biggl[p-n+1-(p+q)\check\lambda^{i,\epsilon}_t-2\frac{(1-\check\lambda^{i,\epsilon}_t)\wedge\frac{\epsilon}{2}}{\frac{\epsilon}{2}}\frac{\check\lambda^{i,\epsilon}_t(1-\check\lambda^{n,\epsilon}_t)}{(\check\lambda^{n,\epsilon}_t-\check\lambda^{i,\epsilon}_t)\vee\epsilon}+2\sum_{j\leq n-1,j\neq i}\frac{\check\lambda^{i,\epsilon}_t(1-\check\lambda^{j,\epsilon}_t)}{\check\lambda^{i,\epsilon}_t-\check\lambda^{j,\epsilon}_t}\Biggr]\Biggl\}dt\\
&0\leq \check\lambda^{n,\epsilon}_t\leq1 \text{ and } 0\leq \check\lambda^{1,\epsilon}_t<\dots< \check\lambda^{n-1,\epsilon}_t\leq1, \text{ a.s.,  } dt-a.e.
\end{aligned}\right.
\tag{$\check{J} _\epsilon$}
\end{equation}

\begin{equation}
    \label{eq:Jbar}
\left\{\begin{aligned}
  d\bar\lambda^{1,\epsilon}_t = \ & 2\sqrt{\bar\lambda^{1,\epsilon}_t(1-\bar\lambda^{1,\epsilon}_t)}dB^1_t+\beta\left[p-n+1-(p+q)\bar\lambda^{1,\epsilon}_t-2\frac{(1-\bar\lambda^{1,\epsilon}_t)\wedge\frac{\epsilon}{2}}{\frac{\epsilon}{2}}\sum_{j\neq 1}\frac{\bar\lambda^{1,\epsilon}_t(1-\bar\lambda^{j,\epsilon}_t)}{(\bar\lambda^{j,\epsilon}_t-\bar\lambda^{1,\epsilon}_t\wedge\epsilon)\vee\epsilon}\right]dt \\
  d\bar\lambda^{n,\epsilon}_t = \ & 2\sqrt{\bar\lambda^{n,\epsilon}_t(1-\bar\lambda^{n,\epsilon}_t)}dB^n_t+\beta\left[p-n+1-(p+q)\bar\lambda^{n,\epsilon}_t+2\frac{\bar\lambda^{n,\epsilon}_t\wedge\frac{\epsilon}{2}}{\frac{\epsilon}{2}}\sum_{j\neq n}\frac{\bar\lambda^{n,\epsilon}_t(1-\bar\lambda^{j,\epsilon}_t)}{(\bar\lambda^{n,\epsilon}_t-\bar\lambda^{j,\epsilon}_t\wedge\epsilon)\vee\epsilon}\right]dt \\
\forall i\in\{2,\dots, n-1 \}\text{, }  d \bar\lambda^{i,\epsilon}_t =&\ 2\sqrt{\bar\lambda^{i,\epsilon}_t(1-\bar\lambda^{i,\epsilon}_t)}dB^i_t\\
&+\Biggl\{(1-2\bar\lambda^{i,\epsilon}_t)\left[2-0\vee\frac{2}{\sqrt{\epsilon}}\left(\sqrt{1-\bar\lambda^{i,\epsilon}_t}-\frac{\sqrt{\epsilon}}{2}\right)\wedge1 -0\vee\frac{2}{\sqrt{\epsilon}}\left(\sqrt{\bar\lambda^{i,\epsilon}_t}-\frac{\sqrt{\epsilon}}{2}\right)\wedge1 \right]\\
&+\beta\Biggl[p-n+1-(p+q)\bar\lambda^{i,\epsilon}_t+2\frac{(1-\bar\lambda^{i,\epsilon}_t)\wedge\frac{\epsilon}{2}}{\frac{\epsilon}{2}}\left(\frac{\bar\lambda^{i,\epsilon}_t(1-\bar\lambda^{1,\epsilon}_t)}{(\bar\lambda^{i,\epsilon}_t-\bar\lambda^{1,\epsilon}_t)\vee\epsilon}-\frac{\bar\lambda^{i,\epsilon}_t(1-\bar\lambda^{n,\epsilon}_t)}{(\bar\lambda^{n,\epsilon}_t-\bar\lambda^{i,\epsilon}_t)\vee\epsilon}\right)\\
&+2\sum_{2\leq j\leq n-1,j\neq i}\frac{\bar\lambda^{i,\epsilon}_t(1-\bar\lambda^{j,\epsilon}_t)}{\bar\lambda^{i,\epsilon}_t-\bar\lambda^{j,\epsilon}_t}\Biggr]\Biggl\}dt\\
&0\leq \bar\lambda^{1,\epsilon}_t\leq1,\ 0\leq \bar\lambda^{n,\epsilon}_t\leq1 \text{ and } 0\leq \bar\lambda^{2,\epsilon}_t<\dots< \bar\lambda^{n-1,\epsilon}_t\leq1, \text{ a.s.,  } dt-a.e.
\end{aligned}\right.
\tag{$\bar{J} _\epsilon$}
\end{equation}
    These systems are built such that : 
    \begin{eqnarray*}
        \text{(\ref{eq:Jhat})} &\text{  coincides with \eqref{eq:main} on } &\{t, \hat\lambda^{1,\epsilon}_t\geq\frac{\epsilon}{2} \text{ and } \hat\lambda^{n,\epsilon}_t\leq1-\frac{\epsilon}{2}\},\\
        \text{(\ref{eq:Jtilde})} & \text{  coincides with \eqref{eq:main} on } &\{t, \tilde\lambda^{1,\epsilon}_t\leq \epsilon \text{ and } \tilde\lambda^{2,\epsilon}_t-\tilde\lambda^1_t\geq\epsilon \text{ and } \tilde\lambda^{n,\epsilon}_t\leq1-\frac{\epsilon}{2}\},\\
        \text{(\ref{eq:Jcheck})} &\text{  coincides with \eqref{eq:main} on } &\{t, \check\lambda^{1,\epsilon}_t\geq\frac{\epsilon}{2} \text{ and } \check\lambda^{n,\epsilon}_t\geq1-\epsilon\text{ and } \check\lambda^{n,\epsilon}_t-\check\lambda^{n-1,\epsilon}_t\geq\epsilon\},\\
        \text{(\ref{eq:Jbar})} & \text{  coincides with \eqref{eq:main} on } &\{t, \bar\lambda^{1,\epsilon}_t\leq \epsilon \text{ and } \bar\lambda^{2,\epsilon}_t-\bar\lambda^1_t\geq\epsilon  \text{ and }\bar\lambda^{n,\epsilon}_t\geq1-\epsilon\text{ and } \bar\lambda^{n,\epsilon}_t-\bar\lambda^{n-1,\epsilon}_t\geq\epsilon\} .
    \end{eqnarray*}

  Lemmas \ref{lemma:Jhat}, \ref{lemma:Jtilde}, \ref{lemma:Jcheck} and \ref{lemma:Jbar} give  the existence of global pathwise unique strong solutions to (\ref{eq:Jhat}), (\ref{eq:Jtilde}), (\ref{eq:Jcheck}) and (\ref{eq:Jbar}) with any  random initial condition with ordered non-negative coordinates and independent from the driving Brownian motion.

  For $\xi\in[0,1]^n$ deterministic with ordered coordinates, let $\hat\Lambda^{\epsilon,T,\xi}=(\hat{\lambda}_t^{1,\epsilon,T,\xi},\dots,\hat\lambda_t^{1,\epsilon,T,\xi})_t$, $\tilde\Lambda^{\epsilon,T,\xi}=(\tilde{\lambda}_t^{1,\epsilon,T,\xi},\dots,\tilde\lambda_t^{1,\epsilon,T,\xi})_t$, $\check\Lambda^{\epsilon,T,\xi}=(\check{\lambda}_t^{1,\epsilon,T,\xi},\dots,\check\lambda_t^{1,\epsilon,T,\xi})_t$ and $\bar\Lambda^{\epsilon,T,\xi}=(\bar{\lambda}_t^{1,\epsilon,T,\xi},\dots,\bar\lambda_t^{1,\epsilon,T,\xi})_t$ denotes the process solution to respectively (\ref{eq:Jhat}), (\ref{eq:Jtilde}), (\ref{eq:Jcheck}) and (\ref{eq:Jbar}) on $[T,+\infty)$ starting from  $\xi$  at time $T$   and equal to $0$ on $(-\infty,T)$. 

  Let us note $d$ the distance on the set $\Delta=\{\wedge, \sim, \vee,-\}$ defined by 
  \begin{equation*}
      \begin{split}
          d(\sim,\vee) = d(\wedge,-) = 2,\\
          d(\wedge,\sim) = d(\wedge,\vee) = d(-,\sim) = d(-,\vee)=1,
      \end{split}
  \end{equation*}
  which kind be more easily visualised in Figure \ref{figure:d}.

\begin{figure}
  \centering
  \includegraphics[scale=0.5]{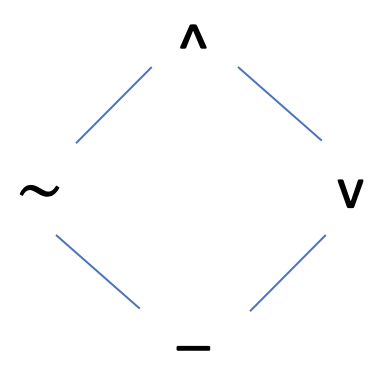}
  \caption{Graphical representation of $(\Delta,d)$.}\label{figure:d}
\end{figure}  

  Let us moreover define the application defined on a subset of $\Delta\times\Delta\times[0,+\infty)\times [0,1]^n$ by

  \begin{equation*}
      \mathsf{T}(symb, symb', t, x)=\left\{\begin{aligned}
          &\inf\left\{t'\geq t : \left(\overset{symb}{\lambda}\right)^{1,\epsilon,t,x}\leq\frac{\epsilon}{2}\right\} \text{ if }(symb,symb')\in\{(\wedge,\sim),(\vee,-)\},\\
          &\inf\left\{t'\geq t : \left(\overset{symb}{\lambda}\right)^{n,\epsilon,t,x}\geq1-\frac{\epsilon}{2}\right\} \text{ if }(symb,symb')\in\{(\wedge,\vee),(\sim,-)\},\\
          &\inf\left\{t'\geq t : \left(\overset{symb}{\lambda}\right)^{n,\epsilon,t,x}\leq1-\epsilon\right\} \text{ if }(symb,symb')\in\{(\vee,\wedge),(-,\sim)\},\\
          &\inf\left\{t'\geq t : \left(\overset{symb}{\lambda}\right)^{1,\epsilon,t,x}\geq\epsilon\right\} \text{ if }(symb,symb')\in\{(\sim,\wedge),(-,\vee)\}.
      \end{aligned}\right.
  \end{equation*}

  We define by induction

  \begin{align*}
      \tau^\epsilon_0 & =  0,\\
      symb^\epsilon_0& = \left\{\begin{aligned}
          \ \wedge & \ \text{ on } \{\lambda^1_0\geq\epsilon \text{ and } \lambda^n_0\leq1-\epsilon\},\\
          \ \sim & \ \text{ on } \{\lambda^1_0<\epsilon \text{ and } \lambda^n_0\leq1-\epsilon\},\\
          \ \vee &\ \text{ on } \{\lambda^1_0\geq\epsilon \text{ and } \lambda^n_0>1-\epsilon\},\\
          \ - &\ \text{ on } \{\lambda^1_0<\epsilon \text{ and } \lambda^n_0>1-\epsilon\},
      \end{aligned}\right.\\
      \tau^\epsilon_1 & =  \bigwedge_{symb\in\Delta,d(symb,symb_0^\epsilon)=1}\mathsf{T}(symb_0^\epsilon,symb,0,\Lambda_0),\\
      \text{ if }\sharp\{symb \in \Delta\backslash\{symb^\epsilon_0\} &\text{ such that } \tau^\epsilon_1  =  \mathsf{T}(symb_0^\epsilon,symb,0,\Lambda_0)\}=1 \text{ then } symb_1^\epsilon \text{ is the only element of that set.}\\ \text{Otherwise, } symb_1^\epsilon &\text{ is the only element of }\Delta \text{ such that } d(symb_0^\epsilon,symb_1^\epsilon)=2,\\ 
      \mathcal{X}^{(1)}_t &= \left(\overset{symb^\epsilon_0}{\Lambda}\right)^{\epsilon,0,\Lambda_0}\mathds{1}_{\left\{0\leq t\leq\tau^\epsilon_1\right\}} \text{ for all }t\in\R,\\
      \tau^\epsilon_2 & =  \bigwedge_{symb\in\Delta,d(symb,symb_1^\epsilon)=1}\mathsf{T}\left(symb_1^\epsilon,symb,\tau^\epsilon_1,\mathcal{X}^{(1)}_{\tau^\epsilon_1}\right),\\
      \mathcal{X}^{(2)}_t &= \left(\overset{symb^\epsilon_1}{\Lambda}\right)^{\epsilon,\tau^\epsilon_1,\mathcal{X}^{(1)}_{\tau^\epsilon_1}}\mathds{1}_{\left\{\tau^\epsilon_1< t\leq\tau^\epsilon_2\right\}} \text{ for all }t\in\R,\\
      &\ \vdots\\
      \tau^\epsilon_{i+1} & =  \bigwedge_{symb\in\Delta,d(symb,symb_i^\epsilon)=1}\mathsf{T}\left(symb_i^\epsilon,symb,\tau^\epsilon_i,\mathcal{X}^{(i)}_{\tau^\epsilon_i}\right),\\
      \text{ if }\sharp\{symb \in \Delta\backslash\{symb^\epsilon_i\} &\text{ such that } \tau^\epsilon_{i+1}  =  \mathsf{T}(symb_i^\epsilon,symb,\tau^\epsilon_i,\mathcal{X}^{(i)}_{\tau^\epsilon_i})\}=1 \text{ then } symb_{i+1}^\epsilon \text{ is the only element of that set.}\\ 
      \text{Otherwise, } symb_{i+1}^\epsilon &\text{ is the only element of }\Delta \text{ such that } d(symb_i^\epsilon,symb_{i+1}^\epsilon)=2,\\ 
      \mathcal{X}^{(i+1)}_t &= \left(\overset{symb^\epsilon_i}{\Lambda}\right)^{\epsilon,\tau^\epsilon_i,\mathcal{X}^{(i)}_{\tau^\epsilon_i}}\mathds{1}_{\left\{\tau^\epsilon_i< t\leq\tau^\epsilon_{i+1}\right\}} \text{ for all }t\in\R,\\
      &\ \vdots
  \end{align*}
  and as for all $i \in\mathbb{N}$, the $\tau_i^\epsilon$ defined before are stopping times for the filtration $(\mathcal{F}_t)_{t\ge0}$, the random vectors $\mathds{1}_{\{\tau^{\epsilon}_i<+\infty\}}\mathcal{X}^{(i)}_{\tau_i^\epsilon}$ are $\mathcal{F}_{\tau_i^\epsilon}$-measurable, the construction makes sense.

  We finally define for all $\epsilon>0$ and $t\geq0$   :
  \begin{equation*}
    \mathcal{Z}^\epsilon_t = \sum_{i=1}^{+\infty}\mathcal{X}^{(i)}_{t}.
  \end{equation*}

  Let us verify that the process $(\mathcal{Z}^\epsilon_t)_t$ is defined globally. To do so, we show that there is no accumulation of the stopping times $\tau^\epsilon_i$ by first separating them between the ones copping with the singularity at $0$ and the ones copping with the singularity at $1$. 

  On $symb^\epsilon_0\in\{\vee,\wedge\}$, let us define $(\sigma^{0,\epsilon}_i)_{i\geq0}$  as  the subsequence of $(\tau^\epsilon_i)_{i\geq0}$ corresponding to the stopping times involving the position of the smallest particle of the considered system regarding $\epsilon$ and $\frac{\epsilon}{2}$ and the stopping times  that can be attributed both to a change of position of the smallest particle and of the biggest particle. 
  This means that on $symb^\epsilon_0\in\{\vee,\wedge\}$, we can write $(\sigma^{0,\epsilon}_i)_{i\geq0}=(\tau^{\epsilon}_{\varphi(i)})_{i\geq0}$ where $\varphi$ is the only bijective increasing map from $ \N $ to $\{j\in\N^*, (symb^\epsilon_{j-1}, symb^\epsilon_j)\in\{(\wedge, \sim),(\vee,-), (\sim,\wedge), (-,\vee),(\wedge,-),(-,\wedge), (\sim,\vee),(\vee,\sim)\}$, where $\N=\{0,1,2,\dots\}$ and $\N^*=\N\backslash\{0\}$.

  On $symb^\epsilon_0\in\{\sim,-\}$, let us define $\sigma^{0,\epsilon}_0=0$ and $(\sigma^{0,\epsilon}_i)_{i\geq1}$  as  the subsequence of $(\tau^\epsilon_i)_{i\geq0}$ corresponding to the stopping times involving the position of the smallest particle of the considered system regarding $\epsilon$ and $\frac{\epsilon}{2}$ and the stopping times  that be attributed both to a change of position of the smallest particle and of the biggest particle. 
  This means that on $symb^\epsilon_0\in\{\sim,-\}$, we can write $(\sigma^{0,\epsilon}_i)_{i\geq1}=(\tau^{\epsilon}_{\varphi(i)})_{i\geq0}$ where $\varphi$ is the only bijective increasing map from $ \N^* $ to $\{j\in\N^*, (symb^\epsilon_{j-1}, symb^\epsilon_j)\in\{(\wedge, \sim),(\vee,-), (\sim,\wedge), (-,\vee),(\wedge,-),(-,\wedge), (\sim,\vee),(\vee,\sim)\}$.

   On $symb^\epsilon_0\in\{\sim,\wedge\}$, let us define the sequence $(\sigma^{1,\epsilon}_i)_{i\geq0}$ as the subsequence of $(\tau^\epsilon_i)_{i\geq0}$ corresponding to the stopping times involving the position of the biggest particle of the considered system regarding $1-\epsilon$ and $1-\frac{\epsilon}{2}$.
   This means that on $symb^\epsilon_0\in\{\sim,\wedge\}$, we can write $(\sigma^{1,\epsilon}_i)_{i\geq0}=(\tau^{\epsilon}_{\varphi(i)})_{i\geq0}$ where $\varphi$ is the only bijective increasing map from $ \N $ to $\{j\in\N^*, (symb^\epsilon_{j-1}, symb^\epsilon_j)\in\{(\wedge, \vee),(\sim,-), (\vee,\wedge), (-,\sim)\}$.
   
   On $symb^\epsilon_0\in\{\vee,-\}$, let us define the sequence $(\sigma^{1,\epsilon}_i)_{i\geq0}$ as $\sigma^{1,\epsilon}_0=0$ and  $(\sigma^{1,\epsilon}_i)_{i\geq1}$ as the subsequence of $(\tau^\epsilon_i)_{i\geq0}$ corresponding to the stopping times involving the position of the biggest particle of the considered system regarding $1-\epsilon$ and $1-\frac{\epsilon}{2}$. 
   This means that on $symb^\epsilon_0\in\{\vee,-\}$, we can write $(\sigma^{1,\epsilon}_i)_{i\geq1}=(\tau^{\epsilon}_{\varphi(i)})_{i\geq0}$ where $\varphi$ is the only bijective increasing map from $ \N^* $ to $\{j\in\N^*, (symb^\epsilon_{j-1}, symb^\epsilon_j)\in\{(\wedge, \vee),(\sim,-), (\vee,\wedge), (-,\sim)\}$.

  We have for $i\ge1$ :

  \begin{itemize}
      \item for all $t\in(\sigma^{0,\epsilon}_{2i+1},\sigma^{0,\epsilon}_{2i+2}]$,
      \begin{eqnarray*}
        d\mathcal{Z}^{\epsilon,1}_t &\leq&  2\sqrt{\mathcal{Z}^{\epsilon,1}_t(1-\mathcal{Z}^{\epsilon,1}_t)}dB^1_t+\beta\left[p-n+1-(p+q)\mathcal{Z}^{\epsilon,1}_t\right]dt, 
      \end{eqnarray*}
      and thus, according to the pathwise comparison theorem of Ikeda and Watanabe (that we recall in Theorem \ref{Ikeda} below)
      \[\mathcal{Z}^{\epsilon,1}_t\leq r_{t-\sigma^{0,\epsilon}_{2i+1}}^{0,i},\]
      where for all $t\geq 0$
      \begin{equation*}
          r_t^{0,i} = \frac{\epsilon}{2}+2\int_{0}^{t}\sqrt{r_s^{0,i}(1-r_s^{0,i})}dB^1_{s+\sigma^{0,\epsilon}_{2i+1}}+\beta (p-n+1) t -\beta(p+q)\int_{0}^{t}r_s^{0,i}ds,
      \end{equation*}
      which is a real Jacobi  process. The times $\sigma^{0,\epsilon}_{2i+2}-\sigma^{0,\epsilon}_{2i+1}$ for all $i\ge1$  are larger than the time interval for the  Jacobi processes  $r^{0,i}$  to go from $\frac{\epsilon}{2}$ to $\epsilon$. Moreover, the times for the $r^{0,i}$ processes to go from $\frac{\epsilon}{2}$ to $\epsilon$ are iid positive random variables. Consequently, there is no accumulation of the stopping times  $\sigma^{0,\epsilon}_{i}$ which go to infinity as $i\rightarrow\infty$.  
      \item for all $t\in(\sigma^{1,\epsilon}_{2i+1},\sigma^{1,\epsilon}_{2i+2}]$,
      \begin{eqnarray*}
        d\mathcal{Z}^{\epsilon,n}_t &\geq&  2\sqrt{\mathcal{Z}^{\epsilon,n}_t(1-\mathcal{Z}^{\epsilon,n}_t)}dB^n_t+\beta\left[p-n+1-(p+q)\mathcal{Z}^{\epsilon,n}_t\right]dt, 
      \end{eqnarray*}
      and thus
      \[\mathcal{Z}^{\epsilon,n}_t\geq r_{t-\sigma^{1,\epsilon}_{2i+1}}^{1,i},\]
      where for all $t\geq 0$
      \begin{equation*}
          r_t^{1,i} = 1-\frac{\epsilon}{2}+2\int_{0}^{t}\sqrt{r_s^{1,i}(1-r_s^{1,i})}dB^n_{s+\sigma^{1,\epsilon}_{2i+1}}+\beta (p-n+1) t -\beta(p+q)\int_{0}^{t}r_s^{1,i}ds,
      \end{equation*}
      which is a real Jacobi  process. The times $\sigma^{1,\epsilon}_{2i+2}-\sigma^{1,\epsilon}_{2i+1}$ for all $i\ge1$  are larger than the time interval for the  Jacobi processes  $r^{1,i}$  to go from $1-\frac{\epsilon}{2}$ to $1-\epsilon$. Moreover, the times for the $r^{1,i}$ processes to go from $1-\frac{\epsilon}{2}$ to $1-\epsilon$ are iid positive random variables. Consequently, there is no accumulation of the stopping times  $\sigma^{1,\epsilon}_{i}$ which go to infinity as $i\rightarrow\infty$.  
  \end{itemize}
  The stochastic process $\mathcal{Z}^\epsilon$ is thus defined globally.

  We recall that the SDEs (\ref{eq:Jhat}), (\ref{eq:Jtilde}), (\ref{eq:Jcheck}), (\ref{eq:Jbar}) respectively coincide with the system of SDEs \eqref{eq:main} on  $\{t, \hat\lambda^{1,\epsilon}_t\geq\frac{\epsilon}{2} \text{ and } \hat\lambda^{n,\epsilon}_t\leq1-\frac{\epsilon}{2}\}$, $\{t, \tilde\lambda^{1,\epsilon}_t\leq \epsilon \text{ and } \tilde\lambda^{2,\epsilon}_t-\tilde\lambda^1_t\geq\epsilon \text{ and } \tilde\lambda^{n,\epsilon}_t\leq1-\frac{\epsilon}{2}\}$, $\{t, \check\lambda^{1,\epsilon}_t\geq\frac{\epsilon}{2} \text{ and } \check\lambda^{n,\epsilon}_t\geq1-\epsilon\text{ and } \check\lambda^{n,\epsilon}_t-\check\lambda^{n-1,\epsilon}_t\geq\epsilon\}$ and $\{t, \bar\lambda^{1,\epsilon}_t\leq \epsilon \text{ and } \bar\lambda^{2,\epsilon}_t-\bar\lambda^1_t\geq\epsilon  \text{ and }\bar\lambda^{n,\epsilon}_t\geq1-\epsilon\text{ and } \bar\lambda^{n,\epsilon}_t-\bar\lambda^{n-1,\epsilon}_t\geq\epsilon\}$. On the other hand, $(\mathcal{Z}^\epsilon_t)_t$ evolves according to those four SDEs, and verify $\mathcal{Z}^{\epsilon,1}_t\geq\frac{\epsilon}{2}$ on $[\tau^\epsilon_i,\tau^\epsilon_{i+1}]$ for $i$ such that $symb^\epsilon_i\in\{\wedge,\vee\}$ and $\mathcal{Z}^{\epsilon,n}_t\leq1-\frac{\epsilon}{2}$ on $[\tau^\epsilon_i,\tau^\epsilon_{i+1}]$ for $i$ such that $symb^\epsilon_i\in\{\wedge,\sim\}$. By induction on $i$, $(\mathcal{Z}^\epsilon_t)_t$ is a solution to \eqref{eq:main} until

  \begin{align*}
    \inf&\left\{t\in\bigcup_{i\in\mathbb{N}, symb^\epsilon_i\in\{\sim,-\}}[\tau^\epsilon_{i},\tau^\epsilon_{i+1}], \mathcal{Z}^{2,\epsilon}_t-\mathcal{Z}^{1,\epsilon}_t\leq\epsilon\right\}\wedge\inf\left\{t\in\bigcup_{i\in\mathbb{N}, symb^\epsilon_i\in\{\vee,-\}}[\tau^\epsilon_{i},\tau^\epsilon_{i+1}], \mathcal{Z}^{n,\epsilon}_t-\mathcal{Z}^{n-1,\epsilon}_t\leq\epsilon\right\}\\
    &\geq\inf\left\{t\geq0 :  \left(\mathcal{Z}^{1,\epsilon}_t\leq\epsilon \text{ and }\mathcal{Z}^{2,\epsilon}_t-\mathcal{Z}^{1,\epsilon}_t\leq\epsilon\right) \text{ or } \left(\mathcal{Z}^{n,\epsilon}_t\geq1-\epsilon \text{ and }\mathcal{Z}^{n,\epsilon}_t-\mathcal{Z}^{n-1,\epsilon}_t\leq\epsilon\right)\right\} =:\zeta_\epsilon.
\end{align*}
From Lemmas \ref{lemma:Jhat}, \ref{lemma:Jtilde}, \ref{lemma:Jcheck} and \ref{lemma:Jbar} we have : 
\begin{eqnarray}
    \label{eq:multiple-collision-hat}
    &&\mathbb{P}\{\exists i\in\{j\in\N, symb^\epsilon_j=\wedge\},\exists t\in(\tau_{i}^\epsilon, \tau_{i+1}^\epsilon]:\mathcal{Z}^{k,\epsilon}_t=\mathcal{Z}^{k+1,\epsilon}_t \text{ and } \mathcal{Z}^{l,\epsilon}_t=\mathcal{Z}^{l+1,\epsilon}_t \text{ for some } 0\leq k<l\leq n\}=0\quad\\
    \label{eq:multiple-collision-tilde}
    &&\mathbb{P}\{\exists i\in\{j\in\N, symb^\epsilon_j=\ \sim\},\exists t\in(\tau_{i}^\epsilon, \tau_{i+1}^\epsilon]:\mathcal{Z}^{k,\epsilon}_t=\mathcal{Z}^{k+1,\epsilon}_t \text{ and } \mathcal{Z}^{l,\epsilon}_t=\mathcal{Z}^{l+1,\epsilon}_t \text{ for some } 2\leq k<l\leq n\}=0\\
    \label{eq:multiple-collision-check}
    &&\mathbb{P}\{\exists i\in\{j\in\N, symb^\epsilon_j=\vee\},\exists t\in(\tau_{i}^\epsilon, \tau_{i+1}^\epsilon]:\mathcal{Z}^{k,\epsilon}_t=\mathcal{Z}^{k+1,\epsilon}_t \text{ and } \mathcal{Z}^{l,\epsilon}_t=\mathcal{Z}^{l+1,\epsilon}_t \text{ for some } 0\leq k<l\leq n-2\}=0\\
    \label{eq:multiple-collision-bar}
    &&\mathbb{P}\{\exists i\in\{j\in\N, symb^\epsilon_j=-\},\exists t\in(\tau_{i}^\epsilon, \tau_{i+1}^\epsilon]:\mathcal{Z}^{k,\epsilon}_t=\mathcal{Z}^{k+1,\epsilon}_t \text{ and } \mathcal{Z}^{l,\epsilon}_t=\mathcal{Z}^{l+1,\epsilon}_t \text{ for some } 2\leq k<l\leq n-2\}=0
\end{eqnarray}
where by convention $\mathcal{Z}^{0,\epsilon}\equiv0$ and $\mathcal{Z}^{n+1,\epsilon}\equiv1$.

On the time intervals $[\tau_{i}^\epsilon\wedge\zeta_\epsilon, \tau_{i+1}^\epsilon\wedge\zeta_\epsilon]$ for $i\in\{j\in\N, symb^\epsilon_j\in\{\sim,-\}\}$ , we have $ \mathcal{Z}^{2,\epsilon}_t-\mathcal{Z}^{1,\epsilon}_t\geq\epsilon$ and on the time intervals $[\tau_{i}^\epsilon\wedge\zeta_\epsilon, \tau_{i+1}^\epsilon\wedge\zeta_\epsilon]$ for $i\in\{j\in\N, symb^\epsilon_j\in\{\vee,-\}\}$ , we have $ \mathcal{Z}^{n,\epsilon}_t-\mathcal{Z}^{n-1,\epsilon}_t\geq\epsilon$. This together with (\ref{eq:multiple-collision-hat},\ref{eq:multiple-collision-tilde},\ref{eq:multiple-collision-check},\ref{eq:multiple-collision-bar}) allows to conclude :
\begin{equation}
\label{eq:multiple-collision-Z}
    \mathbb{P}\Big\{\exists t\in (0,\zeta_\epsilon]: \mathcal{Z}^{i,\epsilon}_t = \mathcal{Z}^{i+1,\epsilon}_t \text{ and } \mathcal{Z}^{j,\epsilon}_t = \mathcal{Z}^{j+1,\epsilon}_t \text{ for some }1\leq i<j\leq n-1\Big\}=0.
\end{equation}

As the solutions to equation \eqref{eq:main} are pathwise unique (see Lemma \ref{lemma:pathwise_uniqueness}), for $n\in\mathbb{N}^*$, the processes $\mathcal{Z}^{\frac{1}{n}}$ and $\mathcal{Z}^{\frac{1}{n+1}}$ coincide on $\left[0,\zeta_{\frac{1}{n}}\wedge\zeta_{\frac{1}{n+1}}\right]$. Thus, $\zeta_{\frac{1}{n}}\wedge\zeta_{\frac{1}{n+1}}=\zeta_{\frac{1}{n}}$ and the sequence $\left(\zeta_{\frac{1}{n}}\right)_{n\in\mathbb{N}^*}$  is non-decreasing. Moreover, for all $n\in\mathbb{N}^*$, $\mathcal{Z}^{\frac{1}{n}}$  verifies (\ref{eq:multiple-collision-Z}). Consequently,  we can define for all $t\in[0,\underset{\epsilon\rightarrow0}{\lim}\zeta_\epsilon)$
\begin{equation}
\label{solution}
    \Lambda_t =\mathcal{Z}^1_{t}\mathds{1}_{\left\{0\leq t\leq\zeta_{1}\right\}}+ \sum_{n\geq1}\mathcal{Z}^{\frac{1}{n+1}}_t\mathds{1}_{\left\{\zeta_{\frac{1}{n}}<t\leq\zeta_{\frac{1}{n+1}}\right\}},
\end{equation}
which is a solution to the system of SDEs \eqref{eq:main} on $[0,\underset{\epsilon\rightarrow0}{\lim}\zeta_\epsilon)$ verifying $(iii)$ of Theorem \ref{thm:existence}.

Finally,
as the solutions to \eqref{eq:main} are pathwise unique   (Lemma \ref{lemma:pathwise_uniqueness}), we can apply the Yamada-Watanabe theorem (see for instance \cite[Theorem 1.7 p.368]{MR1725357}) to deduce the existence of strong solutions to the equation.

\paragraph{}
  
   Since on $\left\{\zeta_\epsilon<+\infty\right\}$ we have $\lambda^1_{\zeta_\epsilon}+\lambda^2_{\zeta_\epsilon}= 2\lambda^1_{\zeta_\epsilon}+\lambda^2_{\zeta_\epsilon}-\lambda^1_{\zeta_\epsilon}\leq3\epsilon$, or $\left(1-\lambda^n_{\zeta_\epsilon}\right)+\left(1-\lambda^{n-1}_{\zeta_\epsilon}\right)= 2\left(1-\lambda^n_{\zeta_\epsilon}\right)+\left(1-\lambda^{n-1}_{\zeta_\epsilon}\right)-\left(1-\lambda^n_{\zeta_\epsilon}\right)\leq3\epsilon$, we have on $\left\{\underset{\epsilon\rightarrow0}{\lim}\zeta_\epsilon<+\infty\right\}$ : $\underset{t\in[0,\underset{\epsilon\rightarrow0}{\lim}\zeta_\epsilon)}{\inf}\lambda^1_t+\lambda^2_t=0$ or $\underset{t\in[0,\underset{\epsilon\rightarrow0}{\lim}\zeta_\epsilon)}{\sup}\lambda^n_t+\lambda^{n-1}_t=2$.
   On $\left\{\underset{\epsilon\rightarrow0}{\lim}\zeta_\epsilon<+\infty\right\}\cap\left\{\underset{t\in[0,\underset{\epsilon\rightarrow0}{\lim}\zeta_\epsilon)}{\inf}\lambda^1_t+\lambda^2_t=0\right\}$ we use Proposition \ref{prop:multiplecollision} $(i)$  with $k=2$ to conclude that 
   $$\underset{\epsilon\rightarrow0}{\lim}\inf\left\{t\geq0 :  \mathcal{Z}^{1,\epsilon}_t\leq\epsilon \text{ and }\mathcal{Z}^{2,\epsilon}_t-\mathcal{Z}^{1,\epsilon}_t\leq\epsilon\right\} =+\infty \text{ when } p-n+1\geq\frac{1}{\beta}-1.$$ 

   Likewise, on $\left\{\underset{\epsilon\rightarrow0}{\lim}\zeta_\epsilon<+\infty\right\}\cap\left\{\underset{t\in[0,\underset{\epsilon\rightarrow0}{\lim}\zeta_\epsilon)}{\sup}\lambda^n_t+\lambda^{n-1}_t=2\right\}$
   we  use Proposition \ref{prop:multiplecollision} $(ii)$ with $k=2$ to conclude that 
   $$\underset{\epsilon\rightarrow0}{\lim}\inf\left\{t\geq0 :  \mathcal{Z}^{n,\epsilon}_t\geq1-\epsilon \text{ and }\mathcal{Z}^{n,\epsilon}_t-\mathcal{Z}^{n-1,\epsilon}_t\leq\epsilon\right\} =+\infty \text{ when } q-n+1\geq\frac{1}{\beta}-1.$$ Consequently, when $p\wedge q -n+1\geq \frac{1}{\beta}-1$,
   \[\underset{\epsilon\rightarrow0}{\lim}\zeta_\epsilon =+\infty,\]

   which is $(i)$ from Theorem \ref{thm:existence}. 
\end{proof}
\begin{lemma}
    \label{lemma:Jhat}
    Let us assume $p\geq q$ and $q-n+1>0$.
    The system of  SDEs
    \begin{equation}
        \label{eq:Jhat}
        \begin{split}
            d \hat\lambda^{i,\epsilon}_t = 2\sqrt{\hat\lambda^{i,\epsilon}_t(1-\hat\lambda^{i,\epsilon}_t)}dB^i_t &+\Biggl\{(1-2\hat\lambda^{i,\epsilon}_t)\left[2-0\vee\frac{2\sqrt{2}}{\sqrt{\epsilon}}\left(\sqrt{1-\hat\lambda^{i,\epsilon}_t}-\frac{\sqrt{\epsilon}}{2\sqrt{2}}\right)\wedge1 -0\vee\frac{2\sqrt{2}}{\sqrt{\epsilon}}\left(\sqrt{\hat\lambda^{i,\epsilon}_t}-\frac{\sqrt{\epsilon}}{2\sqrt{2}}\right)\wedge1 \right]\\
            &+\beta\left[p-(p+q)\hat\lambda^{i,\epsilon}_t+\sum_{j\neq i}\frac{\hat\lambda^{i,\epsilon}_t(1-\hat\lambda^{j,\epsilon}_t)+\hat\lambda^{j,\epsilon}_t(1-\hat\lambda^{i,\epsilon}_t)}{\hat\lambda^{i,\epsilon}_t-\hat\lambda^{j,\epsilon}_t}\right]\Biggr\}dt \\
            &0\leq \hat\lambda^{1,\epsilon}_t<\dots< \hat\lambda^{n,\epsilon}_t\leq1, \text{ a.s.,  } dt-a.e.
        \end{split}\tag{$\hat{J} _\epsilon$}
        \end{equation}
    
      has a global pathwise unique strong solution $(\hat{\lambda}^{1,\epsilon}_t,\dots,\hat{\lambda}^{1,\epsilon}_t)_{t}$ starting from any random  initial condition $\Lambda_0=(\lambda^1_0,\dots,\lambda^n_0)$ independent from $\mathbf{B}$ such that $0\leq \lambda^1_0\leq\dots\leq\lambda^n_0\leq1$ a.s.
      
      Moreover,
      \begin{equation}
      \label{eq:Jhatnonmultiplecollision}
          \mathbb{P}\{\exists t>0:\hat\lambda^{i,\epsilon}_t=\hat\lambda^{i+1,\epsilon}_t \text{ and } \hat\lambda^{j,\epsilon}_t=\hat\lambda^{j+1,\epsilon}_t \text{ for some } 0\leq i<j\leq n\}=0,
      \end{equation}
      where by convention $\hat\lambda^{0,\epsilon}\equiv0$ and $\hat\lambda^{n+1,\epsilon}\equiv1$.
\end{lemma}

    \begin{proof}
    Let us consider  $\mathcal{F}_t=\sigma\left(\left(\arcsin\left(\sqrt{\lambda^1_0}\right),\dots,\arcsin\left(\sqrt{\lambda^n_0}\right)\right), \left(\mathbf{B}_s \right)_{s \le t} \right)$    and the system of SDEs defined by 

    \begin{equation}
    \label{eq:JhatGrad}
        \begin{split}
            d\hat\phi^{\epsilon,i}_t=&dB^i_t+\Biggl\{\frac{\beta(p-q)}{2}\mathrm{cot}\hat\phi^{\epsilon,i}_t\\
            &+\left(\beta(q-n+1)+\left[1-0\vee\frac{2\sqrt{2}}{\sqrt{\epsilon}}\left(\cos\hat\phi^{\epsilon,i}_t-\frac{\sqrt{\epsilon}}{2\sqrt{2}}\right)\wedge1 -0\vee\frac{2\sqrt{2}}{\sqrt{\epsilon}}\left(\sin\hat\phi^{\epsilon,i}_t-\frac{\sqrt{\epsilon}}{2\sqrt{2}}\right)\wedge1 \right]\right)\mathrm{cot}(2\hat\phi^{\epsilon,i}_t)\\
            &+\frac{\beta}{2}\sum_{j\ne i}\left[\mathrm{cot}(\hat\phi^{\epsilon,i}_t+\hat\phi^{\epsilon,j}_t)+\mathrm{cot}(\hat\phi^{\epsilon,i}_t-\hat\phi^{\epsilon,j}_t)\right]\Biggr\}dt\\
            &0\leq \hat\phi^{\epsilon,1}_t<\dots< \hat\phi^{\epsilon,n}_t\le\frac{\pi}{2}, \text{  a.s. } dt-\text{a.e.,}
        \end{split}   \tag{$\hat{J}_\epsilon^\phi$}
    \end{equation}
      with  random initial condition $\left(\arcsin\left(\sqrt{\lambda^1_0}\right),\dots,\arcsin\left(\sqrt{\lambda^n_0}\right)\right)$ such that $0\leq \arcsin\left(\sqrt{\lambda^1_0}\right)\leq\dots\leq \arcsin\left(\sqrt{\lambda^n_0}\right)\leq\frac{\pi}{2}$.

    We are going to apply  Cepa's multivoque equations theory (\cite{MR1459451}) to conclude that there exists a unique strong solution to (\ref{eq:JhatGrad}). 
    
    To do so, we define
    \begin{eqnarray}
    D&=&\{0<\phi^1<\phi^2<\dots<\phi^n<\frac{\pi}{2}\}\nonumber\\
    \mathcal{V}  &:&  (\phi^1,\dots,\phi^n)\in \mathbb{R}^n \rightarrow 
  \left\{
  \begin{array}{c }\displaystyle
    -\sum_{i=1}^n\Bigg\{\frac{\beta(p-q)}{2}\ln|\sin\phi^i|+\beta\frac{q-n+1}{2}\ln|\sin (2\phi^i)|\\\displaystyle
    \qquad\qquad +\frac{\beta}{4}\sum_{j\neq i}\left(\ln|\sin(\phi^i+\phi^j)|+\ln|\sin(\phi^i-\phi^j)|\right)\Bigg\}\text{ if } x\in D \\
      +\infty \text{ if } x\notin D 
  \end{array}
  \right.\nonumber\\
  g &:&  (\phi^1,\dots,\phi^n)\in \mathbb{R}^n \rightarrow 
      \Biggl(\left[1-0\vee\frac{2\sqrt{2}}{\sqrt{\epsilon}}\left(\cos\phi^1-\frac{\sqrt{\epsilon}}{2\sqrt{2}}\right)\wedge1 -0\vee\frac{2\sqrt{2}}{\sqrt{\epsilon}}\left(\sin\phi^1-\frac{\sqrt{\epsilon}}{2\sqrt{2}}\right)\wedge1 \right]\mathrm{cot}(2\phi^1),\nonumber\\
      &&\qquad\qquad\qquad\qquad\dots,\left[1-0\vee\frac{2\sqrt{2}}{\sqrt{\epsilon}}\left(\cos\phi^n-\frac{\sqrt{\epsilon}}{2\sqrt{2}}\right)\wedge1 -0\vee\frac{2\sqrt{2}}{\sqrt{\epsilon}}\left(\sin\phi^n-\frac{\sqrt{\epsilon}}{2\sqrt{2}}\right)\wedge1 \right]\mathrm{cot}(2\phi^n)\Biggr), \nonumber
    \end{eqnarray}
    to rewrite the system of SDEs on $D$ with $\hat\Phi=(\hat\phi^{1,\epsilon}_t,\dots,\hat\phi^{n,\epsilon}_t)_t$  the following way
    
    \begin{equation}
   d\hat\Phi_t  =  d\mathbf{B}_t+g(\hat\Phi_t)dt-\nabla\mathcal{V}(\hat\Phi_t)dt.\tag{$\hat{J}_\epsilon^\phi$}
    \end{equation}
    Since $g$ is globally Lipschitz and $\mathcal{V}$ is convex, Cépa's multivoque equations theory shows the existence and uniqueness of a strong solution to equation

    \begin{align}
     d\hat{\varphi}_t & =  d\mathbf{B}_t+g(\hat{\varphi}_t)dt-\nabla\mathcal{V}(\hat{\varphi}_t)dt-\nu(\hat{\varphi}_t)dL_t \text{ for all  }t\geq0\label{eq:hatJgradCepa}\tag{$\hat{\mathcal{J}}_\epsilon^\phi$}\\
     & \forall t\geq0,\hat{\varphi}_t\in \Bar D \text{ a.s.}\nonumber\\
     &\hat{\varphi}_0=\left(\arcsin\left(\sqrt{\lambda^1_0}\right),\dots,\arcsin\left(\sqrt{\lambda^n_0}\right)\right),\nonumber
    \end{align}
    where $\hat{\varphi}$  is a continuous adapted to $(\mathcal{F}_t)_{t\geq0}$ process,   $L$ is a continuous non-decreasing adapted to $(\mathcal{F}_t)_{t\geq0}$ process with $L_0=0$  verifying
    \begin{equation}
        L_t= \int_0^t\mathds{1}_{\{\hat{\varphi}_s\in\partial D\}}dL_s,\nonumber
    \end{equation}
    and
     $\nu(x)\in\pi(x)$ ($\pi(x)$ is the set of unitary outward normals to $\partial D$ at $x\in\partial D$). The solution to equation (\ref{eq:hatJgradCepa}) follows the conditions : for all $t>0$ 
    \begin{eqnarray}
    \mathbb{E}\left[\int_0^t\mathds{1}_{\{\hat{\varphi}_s\in\partial D\}}ds\right] & = & 0,\nonumber\\
    \mathbb{E}\left[\int_0^t|\nabla\mathcal{V}(\hat{\varphi}_s)|ds\right] & < & \infty.\nonumber
    \end{eqnarray}

    We apply \cite[Theorem 2.2]{MR1875671} which is an application of Cépa's multivoque equations theory to this kind of SDE and the remark following \cite[Theorem 3.1]{MR1459451} to deduce that (\ref{eq:hatJgradCepa})  has a unique strong solution. To prove that the boundary process $L$ is equal to zero, we just follow the steps of the proof of \cite[Lemma 2.2]{Demni2009BetaJP}, itself coming from the proof of \cite[Lemma 1]{MR2566989} which is an adaptation of the proof \cite[Theorem 2.2, Step 1]{MR1875671} and \cite[Lemma 3.8]{MR1875671}.

    Then, setting with $\hat\lambda^{i,\epsilon}=\sin^2(\varphi^{i,\epsilon})$ for all $i\in\{1,\dots,n\}$ we obtain a global solution to (\ref{eq:Jhat}). Following the  approach used in the proof of Lemma \ref{lemma:Jcheck} below to demonstrate the pathwise uniqueness of a slightly more complicated system \eqref{eq:Jtilde}, we obtain that the solutions to  (\ref{eq:Jhat}) are pathwise unique. The Yamada-Watanabe Theorem (see for instance \cite[Theorem 1.7 p.368]{MR1725357}) allows to conclude that (\ref{eq:Jhat}) has a pathwise unique global strong solution.
    
    Let us now prove $(\ref{eq:Jhatnonmultiplecollision})$.
    
    Let us consider the system of SDEs defined by (\ref{eq:JhatGrad})  with initial condition $0\leq \arcsin\left(\sqrt{\lambda^1_0}\right)\leq\dots\leq \arcsin\left(\sqrt{\lambda^n_0}\right)\leq\frac{\pi}{2}$.

      Let us define for all $\epsilon>0$ and for  $t\geq0$ :   $\Theta(t)=(\theta_1(t),\dots,\theta_n(t))$ with
    \begin{eqnarray}
      \forall i\in\{1,\dots, n \}\text{, } \theta_i(t) & = &\left[1-0\vee\frac{2\sqrt{2}}{\sqrt{\epsilon}}\left(\cos\hat\phi^{\epsilon,i}_t-\frac{\sqrt{\epsilon}}{2\sqrt{2}}\right)\wedge1 -0\vee\frac{2\sqrt{2}}{\sqrt{\epsilon}}\left(\sin\hat\phi^{\epsilon,i}_t-\frac{\sqrt{\epsilon}}{2\sqrt{2}}\right)\wedge1 \right]\mathrm{cot}(2\hat\phi^{\epsilon,i}_t),\nonumber
    \end{eqnarray}
    and for all $t\geq0$
    \begin{equation}
        Z(t)=\exp\left\{-\int_0^{t}\Theta(u)\cdot d{\textbf{B}}_u-\frac{1}{2}\int_0^{t}||\Theta(u)||^2du\right\}. \nonumber
    \end{equation}

    We have for all $i\in\{1,\dots,n\}$,
    \begin{eqnarray}
      \theta_i^2(t) & = &\left(\left[1-0\vee\frac{2\sqrt{2}}{\sqrt{\epsilon}}\left(\cos\hat\phi^{\epsilon,i}_t-\frac{\sqrt{\epsilon}}{2\sqrt{2}}\right)\wedge1 -0\vee\frac{2\sqrt{2}}{\sqrt{\epsilon}}\left(\sin\hat\phi^{\epsilon,i}_t-\frac{\sqrt{\epsilon}}{2\sqrt{2}}\right)\wedge1 \right]\mathrm{cot}(2\hat\phi^{\epsilon,i}_t)\right)^2\nonumber\\
      & \leq & \left(\frac{1-\frac{\epsilon}{4}}{2\sqrt{\frac{\epsilon}{8}\left(1-\frac{\epsilon}{8}\right)}}\right)^2,\nonumber
    \end{eqnarray}   
    where we used the fact that the support of the map $$x\in[0,1]\mapsto\left[1-0\vee\frac{2\sqrt{2}}{\sqrt{\epsilon}}\left(\sqrt{1-x}-\frac{\sqrt{\epsilon}}{2\sqrt{2}}\right)\wedge1 -0\vee\frac{2\sqrt{2}}{\sqrt{\epsilon}}\left(\sqrt{x}-\frac{\sqrt{\epsilon}}{2\sqrt{2}}\right)\wedge1 \right]\in[0,1],$$ is $\left[\frac{\epsilon}{8},1-\frac{\epsilon}{8}\right]$ and that the map $\phi\mapsto \cot(2\phi)=\frac{1-\sin^2(\phi)}{2\sqrt{\sin^2(\phi)(1-\sin^2(\phi))}}$ is monotonous on the interval $\left(0,\frac{\pi}{2}\right)$.

    We thus have
    \begin{eqnarray}
      \mathbb{E}\left[\exp\left\{\frac{1}{2}\int_0^{t}||\Theta(u)||^2du\right\}\right]& <&\infty \text{ for all }t\geq0.\nonumber
    \end{eqnarray} 
    Then, according to Novikov's criterion (see for instance \cite[Proposition 5.12 p.198]{MR1121940}), $Z$ is a $\mathbb{P}$-martingale, and $\mathbb{E}[Z(t)]=1$. Consequently, recalling that   $\mathcal{F}_t=\sigma\left(\left(\arcsin\left(\sqrt{\lambda^1_0}\right),\dots, \arcsin\left(\sqrt{\lambda^n_0}\right)\right), \left(\mathbf{B}_s \right)_{s \le t} \right)$  and defining $\mathbb{Q}$ such   that
    $$\frac{d\mathbb{Q}}{d\mathbb{P}}_{|\mathcal{F}_t}=Z(t),$$ and  for all  $i\in\{1,\dots,n\}$, 
    \begin{eqnarray}
        \hat{ B}^{i}_t  &=& B^i_t+\int_0^{t}\theta_i(s)ds \nonumber \\
        & = &  B^i_t+\int_0^{t}\left[1-0\vee\frac{2\sqrt{2}}{\sqrt{\epsilon}}\left(\cos\hat\phi^{\epsilon,i}_s-\frac{\sqrt{\epsilon}}{2\sqrt{2}}\right)\wedge1 -0\vee\frac{2\sqrt{2}}{\sqrt{\epsilon}}\left(\sin\hat\phi^{\epsilon,i}_s-\frac{\sqrt{\epsilon}}{2\sqrt{2}}\right)\wedge1 \right]\mathrm{cot}(2\hat\phi^{\epsilon,i}_s)ds, \text{ }t\geq 0,\nonumber
    \end{eqnarray}
    ${\hat{\textbf{B}}}=({\hat{ B}^{1}}_t,\dots,{\hat{ B}^{n}}_t)_t$ is a $\mathbb{Q}$- Brownian motion
    according to the Girsanov theorem (see for instance \cite[Proposition 5.4 p.194]{MR1121940}).
    
    Thus,  (\ref{eq:JhatGrad}) can be rewritten in terms of $\hat{\mathbf{B}}$ as 
    \begin{equation}
        \label{eq:JhatGradNoG}
            \begin{split}
                d\hat\phi^{\epsilon,i}_t=&d\hat B^i_t+\Biggl\{\frac{\beta(p-q)}{2}\mathrm{cot}\hat\phi^{\epsilon,i}_t+\beta(q-n+1)\mathrm{cot}(2\hat\phi^{\epsilon,i}_t)+\frac{\beta}{2}\sum_{j\ne i}\left[\mathrm{cot}(\hat\phi^{\epsilon,i}_t+\hat\phi^{\epsilon,j}_t)+\mathrm{cot}(\hat\phi^{\epsilon,i}_t-\hat\phi^{\epsilon,j}_t)\right]\Biggr\}dt\\
                &0\leq \hat\phi^{\epsilon,1}_t<\dots< \hat\phi^{\epsilon,n}_t\le\frac{\pi}{2}, \text{  a.s. } dt-\text{a.e.}
            \end{split}  
        \end{equation}
       By the same arguments as in the beginning of the proof, the normaly reflected SDE

       \begin{align}
        d\hat{\hat{\varphi}}_t & =  d\mathbf{\hat{ B}}_t-\nabla\mathcal{V}(\hat{\hat{\varphi}}_t)dt-\nu(\hat{\hat{\varphi}}_t)d\hat L_t \text{ for all  }t\geq0\label{eq:hatJgradnoGCepa}\\
        & \forall t\geq0,\hat{\hat{\varphi}}_t\in \Bar D \text{ a.s.}\nonumber\\
        &\hat{\hat{\varphi}}_0=\left(\arcsin\left(\sqrt{\lambda^1_0}\right),\dots,\arcsin\left(\sqrt{\lambda^n_0}\right)\right),\nonumber
       \end{align}
      admits a global solution and the term $\nu(\hat{\hat{\varphi}}_t)d\hat L_t $ is zero, so that $\hat{\hat{\varphi}}$ solves \eqref{eq:JhatGradNoG}. We can apply \cite[Theorem 3.1]{MR2792586}, the proof of which can  be applied to our case of SDE (\ref{eq:hatJgradnoGCepa}) to conclude that its solutions cannot have multiple collisions, which implies that there is no multiple collision  of $(\hat\phi^{1,\epsilon}_t,\dots,\hat\phi^{n,\epsilon}_t)_{t}$  under the probability $\mathbb{Q}$. There is thus no multiple collision of $(\hat\phi^{1,\epsilon}_t,\dots,\hat\phi^{n,\epsilon}_t)_{t\geq0}$ under the probability $\mathbb{P}$  :
    \begin{equation*}
          \mathbb{P}\{\exists t\ge0:\hat\phi^{i,\epsilon}_t=\hat\phi^{i+1,\epsilon}_t \text{ and } \hat\phi^{j,\epsilon}_t=\hat\phi^{j+1,\epsilon}_t \text{ for some } 0\leq i<j\leq n\}=0.
    \end{equation*}

    \end{proof}

\begin{lemma}
    \label{lemma:Jtilde}
    Let us assume $p\ge q$ and $q-n+1>0$.  The system of SDEs (\ref{eq:Jtilde}) with random initial condition $(\tilde\lambda^{1,\epsilon}_0,\dots,\tilde\lambda^{n,\epsilon}_0)$ such that $0\leq\tilde\lambda^{1,\epsilon}_0\leq\dots\leq\tilde\lambda^{n,\epsilon}_0\leq1 $ a.s. and independent from the Brownian motion $\mathbf{B}=(B^1_t,\dots,B^n_t)_t$ has a global pathwise unique strong solution $(\tilde{\lambda}^{1,\epsilon}_t,\dots,\tilde{\lambda}^{1,\epsilon}_t)_{t\geq0}$.
  
  Moreover, 
  
  \begin{eqnarray}
    \label{multiplecollisionbeta}
    \mathbb{P}\{\exists t>0:\tilde\lambda^{i,\epsilon}_t=\tilde\lambda^{i+1,\epsilon}_t \text{ and } \tilde\lambda^{j,\epsilon}_t=\tilde\lambda^{j+1,\epsilon}_t \text{ for  some } 2\leq i<j\leq n\}=0.
  \end{eqnarray}
\end{lemma}

    \begin{proof}

    Let us consider a  Brownian motion $\Tilde{\textbf{B}}=(\tilde B^1_t,\dots,\tilde B^n_t)_t$, and the system of SDEs

    \begin{equation*}
        \begin{split}
            d\tilde\lambda^{1,\epsilon}_t = \ & 2\sqrt{\tilde\lambda^{1,\epsilon}_t(1-\tilde\lambda^{1,\epsilon}_t)}d\tilde B^1_t+\beta\left[p-n+1-(p+q)\tilde\lambda^{1,\epsilon}_t\right]dt \\
  \forall i\in\{2,\dots, n \}\text{, }  d \tilde\lambda^{i,\epsilon}_t =&\ 2\sqrt{\tilde\lambda^{i,\epsilon}_t(1-\tilde\lambda^{i,\epsilon}_t)}d\tilde B^i_t\\
  &+\Biggl\{(1-2\tilde\lambda^{i,\epsilon}_t)\left[2-0\vee\frac{2\sqrt{2}}{\sqrt{\epsilon}}\left(\sqrt{1-\tilde\lambda^{i,\epsilon}_t}-\frac{\sqrt{\epsilon}}{2\sqrt{2}}\right)\wedge1 -0\vee\frac{2}{\sqrt{\epsilon}}\left(\sqrt{\tilde\lambda^{i,\epsilon}_t}-\frac{\sqrt{\epsilon}}{2}\right)\wedge1 \right]\\
  &+\beta\left[p-1-(p+q)\tilde\lambda^{i,\epsilon}_t+\sum_{j\geq2,j\neq i}\frac{\tilde\lambda^{i,\epsilon}_t(1-\tilde\lambda^{j,\epsilon}_t)+\tilde\lambda^{j,\epsilon}_t(1-\tilde\lambda^{i,\epsilon}_t)}{\tilde\lambda^{i,\epsilon}_t-\tilde\lambda^{j,\epsilon}_t}\right]\Biggl\}dt\\
  &0\leq \tilde\lambda^{1,\epsilon}_t\leq1 \text{ and } 0\leq \tilde\lambda^{2,\epsilon}_t<\dots< \tilde\lambda^{n,\epsilon}_t\leq1, \text{ a.s.,  } dt-a.e.
        \end{split}
  \end{equation*}
    Let us remark that $\tilde\lambda^1$ is a one dimensional Jacobi  process and admits a strong solution defined on $\R_+$ (see Lemma \ref{lemma:doumerc}), and that the coordinates $i\in\{2,\dots,n\}$ satisfy an autonomous system of SDEs for $n-1$ particles similar to equation (\ref{eq:Jhat}). The only differences are  the terms $\left[0\vee\frac{2\sqrt{2}}{\sqrt{\epsilon}}\left(\sqrt{\hat\lambda^{i,\epsilon}_t}-\frac{\sqrt{\epsilon}}{2\sqrt{2}}\right)\wedge1\right]$ and $\beta p$ in (\ref{eq:Jhat}) which become respectively $\left[0\vee\frac{2}{\sqrt{\epsilon}}\left(\sqrt{\tilde\lambda^{i,\epsilon}_t}-\frac{\sqrt{\epsilon}}{2}\right)\wedge1\right]$ and $\beta(p-1)$ here. We can still consider the  process $\left(\arcsin\left(\sqrt{\tilde\lambda^{2,\epsilon}_t}\right),\dots,\arcsin\left(\sqrt{\tilde\lambda^{n,\epsilon}_t}\right)\right)_t$, and apply the same method as in the proof of Lemma \ref{lemma:Jhat} (indeed, if $ q -n+1>0$, then $ q -(n-1)+1>0$) to prove the existence of a pathwise unique strong solution to this subsystem. As the equation for $\tilde\lambda^{1,\epsilon}$ also has a strong solution, the whole n-particles system considered here admits a global strong solution.
    
    Let us define for all $\epsilon>0$ and for  $t\geq0$ :   $\Theta(t)=(\theta_1(t),\dots,\theta_n(t))$ with
    \begin{eqnarray}
      \theta_1(t) &=& -\frac{\beta}{\sqrt{\tilde\lambda^{1,\epsilon}_t(1-\tilde\lambda^{1,\epsilon}_t)}}\times\frac{(1-\tilde\lambda^{1,\epsilon}_t)\wedge\frac{\epsilon}{2}}{\frac{\epsilon}{2}}\sum_{j\neq1}\frac{\tilde\lambda^{1,\epsilon}_t(1-\tilde\lambda^{j,\epsilon}_t)}{(\tilde\lambda^{j,\epsilon}_t-\tilde\lambda^{1,\epsilon}_t\wedge\epsilon)\vee\epsilon},\nonumber \\
      \forall i\in\{2,\dots, n \}\text{, } \theta_i(t) & = &\frac{\beta}{\sqrt{\tilde\lambda^{i,\epsilon}_t(1-\tilde\lambda^{i,\epsilon}_t)}}\times \frac{(1-\tilde\lambda^{i,\epsilon}_t)\wedge\frac{\epsilon}{2}}{\frac{\epsilon}{2}}\times\frac{\tilde\lambda^{i,\epsilon}_t(1-\tilde\lambda^{1,\epsilon}_t)}{(\tilde\lambda^{i,\epsilon}_t-\tilde\lambda^{1,\epsilon}_t)\vee\epsilon}, \nonumber
    \end{eqnarray}
    and for all $t\geq0$
    \begin{equation}
        Z(t)=\exp\left\{\int_0^t\Theta(u)\cdot d\Tilde{\textbf{B}}_u-\frac{1}{2}\int_0^t||\Theta(u)||^2du\right\}. \nonumber
    \end{equation}

    We have
    \begin{eqnarray*}
      \theta_1^2(t) & = & \frac{\beta^2}{\tilde\lambda^{1,\epsilon}_t(1-\tilde\lambda^{1,\epsilon}_t)}\left(\frac{(1-\tilde\lambda^{1,\epsilon}_t)\wedge\frac{\epsilon}{2}}{\frac{\epsilon}{2}}\sum_{j\neq1}\frac{\tilde\lambda^{1,\epsilon}_t(1-\tilde\lambda^{j,\epsilon}_t)}{(\tilde\lambda^{j,\epsilon}_t-\tilde\lambda^{1,\epsilon}_t\wedge\epsilon)\vee\epsilon}\right)^2\\
      &\leq&\beta^2\tilde\lambda^{1,\epsilon}_t\frac{(1-\tilde\lambda^{1,\epsilon}_t)\wedge\frac{\epsilon}{2}}{\frac{\epsilon^2}{4}}\left(\sum_{j\neq1}\frac{1-\tilde\lambda^{j,\epsilon}_t}{(\tilde\lambda^{j,\epsilon}_t-\tilde\lambda^{1,\epsilon}_t\wedge\epsilon)\vee\epsilon}\right)^2\\
      & \leq & \frac{2\beta^2}{\epsilon}\left(\sum_{j>1}\frac{1}{\epsilon}\right)^2 \\
      &\leq&  \frac{2(n-1)^2\beta^2}{\epsilon^3}.
    \end{eqnarray*}

    For all $2<i\leq n$,
    \begin{eqnarray*}
      \theta_i^2(t) & = & \frac{\beta^2}{\tilde\lambda^{i,\epsilon}_t(1-\tilde\lambda^{i,\epsilon}_t)}\times \frac{\left((1-\tilde\lambda^{i,\epsilon}_t)\wedge\frac{\epsilon}{2}\right)^2}{\frac{\epsilon^2}{4}}\times\frac{\left(\tilde\lambda^{i,\epsilon}_t(1-\tilde\lambda^{1,\epsilon}_t)\right)^2}{\left((\tilde\lambda^{i,\epsilon}_t-\tilde\lambda^{1,\epsilon}_t)\vee\epsilon\right)^2}\\
      & \leq & \frac{2\beta^2}{\epsilon}\times\frac{\tilde\lambda^{i,\epsilon}_t\left(1-\tilde\lambda^{1,\epsilon}_t\right)^2}{\epsilon^2}\\
      &\leq &\frac{2\beta^2}{\epsilon^3}.
    \end{eqnarray*}

    We thus have
    \begin{eqnarray}
      \mathbb{E}\left[\exp\left\{\frac{1}{2}\int_0^t||\Theta(u)||^2du\right\}\right]& <&\infty \text{ for all }t\geq0.\nonumber
    \end{eqnarray} 
    Then, according to Novikov's criterion (see for instance \cite[Proposition 5.12 p.198]{MR1121940}), $Z$ is a $\mathbb{P}$-martingale, and $\mathbb{E}[Z(t)]=1$ for all $t\geq0$. Consequently, defining  for all $t\ge0$, $\tilde{\mathcal{F}}_t=\sigma\left((\tilde{\mathbf{B}}_t)_{s\le t},(\tilde\lambda_0^{1,\epsilon},\dots,\tilde\lambda_0^{n,\epsilon})\right)$ and $\mathbb{Q}$ such that
    $$\frac{d\mathbb{Q}}{d\mathbb{P}}_{|\tilde{\mathcal{F}}_t}=Z(t),$$ and
    \begin{eqnarray}
     \tilde{ \tilde {B}}^1_t&=&\tilde B^1_t-\int_0^t\theta_1(s)ds \nonumber \\
      & = & \tilde B^1_t+\int_0^t\frac{\beta}{\sqrt{\tilde\lambda^{1,\epsilon}_s(1-\tilde\lambda^{1,\epsilon}_s)}}\times\frac{(1-\tilde\lambda^{1,\epsilon}_s)\wedge\frac{\epsilon}{2}}{\frac{\epsilon}{2}}\sum_{j\neq1}\frac{\tilde\lambda^{1,\epsilon}_s(1-\tilde\lambda^{j,\epsilon}_s)}{(\tilde\lambda^{j,\epsilon}_s-\tilde\lambda^{1,\epsilon}_s\wedge\epsilon)\vee\epsilon}ds\nonumber\\
        \text{ for all } i\in\{2,\dots,n\},\text{ } \tilde{\tilde{ B}}^i_t  &=&\tilde B^i_t-\int_0^t\theta_i(s)ds \nonumber \\
        & = & \tilde B^i_t-\int_0^t\frac{\beta}{\sqrt{\tilde\lambda^{i,\epsilon}_s(1-\tilde\lambda^{i,\epsilon}_s)}}\times \frac{(1-\tilde\lambda^{i,\epsilon}_s)\wedge\frac{\epsilon}{2}}{\frac{\epsilon}{2}}\times\frac{\tilde\lambda^{i,\epsilon}_s(1-\tilde\lambda^{1,\epsilon}_s)}{(\tilde\lambda^{i,\epsilon}_s-\tilde\lambda^{1,\epsilon}_s)\vee\epsilon} ds, \text{ }0\leq t,\nonumber
    \end{eqnarray}
    $\Tilde{\Tilde{\textbf{B}}}=(\tilde{\tilde{ B}}^1_t,\dots,\tilde{\tilde{ B}}^n_t)_t$ is a $\mathbb{Q}$- Brownian motion
    according to the Girsanov theorem (see for instance \cite[Proposition 5.4 p.194]{MR1121940}).

  Consequently, (\ref{eq:Jtilde}) has a global weak solution.
  
  We now have to prove the pathwise uniqueness of the solutions to (\ref{eq:Jtilde}).
  The differences with Lemma \ref{lemma:pathwise_uniqueness} are the terms \[(1-2\tilde\lambda^{i,\epsilon}_t)\left[2-0\vee\frac{2\sqrt{2}}{\sqrt{\epsilon}}\left(\sqrt{1-\tilde\lambda^{i,\epsilon}_t}-\frac{\sqrt{\epsilon}}{2\sqrt{2}}\right)\wedge1 -0\vee\frac{2}{\sqrt{\epsilon}}\left(\sqrt{\tilde\lambda^{i,\epsilon}_t}-\frac{\sqrt{\epsilon}}{2}\right)\wedge1 \right],\] and the interaction terms between the first particle and the others. 
  
  Let  $Z=(z^1_t,\dots,z^n_t)_t$ and $\tilde Z=(\tilde z^1_t,\dots ,\tilde z^n_t)_t$  be two global solutions to (\ref{eq:Jtilde}) with $Z_0=\tilde Z_0$ independent from the same driving Brownian motion $\textbf{B}=(B^1_t,\dots,B^n_t)_t$.

    The local time of $z^i-\tilde z^i$ at $0$ is zero (\cite[Lemma 3.3 p.389]{MR1725357}). Applying the Tanaka formula to the process $z^i-\tilde z^i$ and summing over $i$,

    \begin{eqnarray}
      \sum_{i=1}^n|z^i_{t}-\tilde z^i_{t}| 
      & = &2\sum_{i=1}^n\int_0^{t}\mathrm{sgn}(z^i_s-\tilde z^i_s)\left(\sqrt{z^i_s(1-z^i_s)}-\sqrt{\tilde z^i_s(1-\tilde z^i_s)}\right)dB^i_s\nonumber\\
      &&+ 2\beta\int_0^{t}\sum_{i=2}^n\text{sgn}(z^i_s-\tilde z^i_s)\sum_{j\geq2, j\neq i}\left(\frac{z^i_s(1-z^j_s)}{z^i_s-z^j_s}-\frac{\tilde z^i_s(1-\tilde z^i_s)}{\tilde z^i_s-\tilde z^j_s}\right)ds\label{pu1}\\
      && -\beta(p+q)\int_0^{t}\sum_{i=1}^n|z^i_s-\tilde z^i_s|ds\label{pu2}\\
      &&+2\beta\int_0^{t}\sum_{i=2}^n\Bigg\{\text{sgn}(z^i_s-\tilde z^i_s)\left(\frac{(1-z^i_s)\wedge\frac{\epsilon}{2}}{\frac{\epsilon}{2}}.\frac{z^i_s(1-z^1_s)}{(z^i_s-z^1_s)\vee\epsilon}-\frac{(1- \tilde z^i_s)\wedge\frac{\epsilon}{2}}{\frac{\epsilon}{2}}.\frac{\tilde z^i_s(1-\tilde z^1_s)}{(\tilde z^i_s-\tilde z^1_s)\vee\epsilon}\right)\nonumber\\
      &&-\text{sgn}(z^1_s-\tilde z^1_s)\left(\frac{(1-z^1_s)\wedge\frac{\epsilon}{2}}{\frac{\epsilon}{2}}\frac{z^1_s(1-z^i_s)}{(z^i_s-z^1_s\wedge\epsilon)\vee\epsilon}-\frac{(1-\tilde z^1_s)\wedge\frac{\epsilon}{2}}{\frac{\epsilon}{2}}\frac{\tilde z^1_s(1-\tilde z^i_s)}{(\tilde z^i_s-\tilde z^1_s\wedge\epsilon)\vee\epsilon}\right)\Bigg\}ds\label{pu3}\\
      &&-\int_0^{t}\sum_{i=2}^n\text{sgn}(z^i_s-\tilde z^i_s)\Biggl\{(1-2z^i_s)\left[2-0\vee\frac{2\sqrt{2}}{\sqrt{\epsilon}}\left(\sqrt{1-z^i_s}-\frac{\sqrt{\epsilon}}{2\sqrt{2}}\right)\wedge1 -0\vee\frac{2}{\sqrt{\epsilon}}\left(\sqrt{z^i_s}-\frac{\sqrt{\epsilon}}{2}\right)\wedge1 \right]\nonumber\\
      &&-(1-2\tilde z^i_s)\left[2-0\vee\frac{2\sqrt{2}}{\sqrt{\epsilon}}\left(\sqrt{1-\tilde z^i_s}-\frac{\sqrt{\epsilon}}{2\sqrt{2}}\right)\wedge1 -0\vee\frac{2}{\sqrt{\epsilon}}\left(\sqrt{\tilde z^i_s}-\frac{\sqrt{\epsilon}}{2}\right)\wedge1 \right]\Biggr\}ds.\label{pu4}
    \end{eqnarray}

    As in the proof of Lemma \ref{lemma:pathwise_uniqueness}, the expectation of the stochastic integrals is zero and the terms (\ref{pu1}) are not positive (see  inequality (\ref{sgncomputation})). To deal with the expectation of (\ref{pu3}), one  remarks that the function \[(x,y)\mapsto\frac{(1-x)\wedge\frac{\epsilon}{2}}{\frac{\epsilon}{2}}.\frac{x(1-y)}{(x-y)\vee\epsilon}\] is Lipschitz on $[0,1]^2$.

     As for the term (\ref{pu4}), the function $f:z\mapsto(1-2z)\left[2-0\vee\frac{2\sqrt{2}}{\sqrt{\epsilon}}\left(\sqrt{1-z}-\frac{\sqrt{\epsilon}}{2\sqrt{2}}\right)\wedge1 -0\vee\frac{2}{\sqrt{\epsilon}}\left(\sqrt{z}-\frac{\sqrt{\epsilon}}{2}\right)\wedge1 \right]$ defined on $[0,1]$ is continuous and can be rewritten 
     \begin{equation*}
       f(z) = \left\{\begin{aligned}
         &1-2z \text{ if }z\in\left[0,\frac{\epsilon}{4}\right)\cup\left(1-\frac{\epsilon}{8},1\right],\\
         &(1-2z)\left(\frac{2\sqrt{z}}{\sqrt{\epsilon}}-1\right) \text{ if }z\in\left[\frac{\epsilon}{4},\epsilon\right],\\
         &(1-2z)\left(\frac{2\sqrt{2(1-z)}}{\sqrt{\epsilon}}-1\right) \text{ if }z\in\left[1-\frac{\epsilon}{2},1-\frac{\epsilon}{8}\right],\\
         &0 \text{ if }z\in \left[\epsilon,1-\frac{\epsilon}{2}\right],
       \end{aligned}\right.
     \end{equation*}
     and is thus Lipschitz on $[0,1]$.
     
     Then   there exists a constant $K\geq0$  such that for all $t\geq0$
     
      \begin{eqnarray}
      \sum_{i=1}^n\mathbb{E}|z^i_{t}-\tilde z^i_{t}| & \leq & K\mathbb{E}\left[\int_0^{t}\sum_{i=1}^n|z^i_{s}-\tilde z^i_{s}|ds\right]\nonumber\\
      & \leq & K\int_0^{t}\sum_{i=1}^n\mathbb{E}|z^i_{s}-\tilde z^i_{s}|ds.\nonumber
    \end{eqnarray}
     
   The Grönwall Lemma allows to conclude that for all $t\geq0$
   \begin{equation}
      \sum_{i=1}^n\mathbb{E}|z^i_{t}-\tilde z^i_{t}| =0,\nonumber 
  \end{equation}
  which concludes the proof on the existence and pathwise uniqueness.
  
   Let us now prove (\ref{multiplecollisionbeta}). To do so, we apply to the process 
  $\left(\arcsin\left(\sqrt{\tilde \lambda^{2,\epsilon}_t}\right),\dots,\arcsin\left(\sqrt{\tilde \lambda^{n,\epsilon}_t}\right)\right)_t$ the method used to prove (\ref{eq:Jhatnonmultiplecollision}). Indeed, this process solves, as explained in the beginning of the proof, a system of SDEs similar to (\ref{eq:Jhat}) for $n-1$ particles.

    \end{proof}
\begin{lemma}
    \label{lemma:Jcheck}
    Let us assume $p\ge q$ and $q-n+1>0$.  The system of SDEs (\ref{eq:Jcheck}) with random initial condition $(\check\lambda^{1,\epsilon}_0,\dots,\check\lambda^{n,\epsilon}_0)$ such that $0\leq\check\lambda^{1,\epsilon}_0\leq\dots\leq\check\lambda^{n,\epsilon}_0\leq1 $ a.s. and independent from the Brownian motion $\mathbf{B}=(B^1_t,\dots,B^n_t)_t$ has a global pathwise unique strong solution $(\check{\lambda}^{1,\epsilon}_t,\dots,\check{\lambda}^{n,\epsilon}_t)_{t\geq0}$.
  
  Moreover, 
  
  \begin{eqnarray}
    \mathbb{P}\{\exists t>0:\check\lambda^{i,\epsilon}_t=\check\lambda^{i+1,\epsilon}_t \text{ and } \check\lambda^{j,\epsilon}_t=\check\lambda^{j+1,\epsilon}_t \text{ for some } 0\leq i<j\leq n-2\}=0.\nonumber
  \end{eqnarray}
\end{lemma}
\begin{proof}
    The proof of this Lemma follows the same steps as the proof of Lemma \ref{lemma:Jtilde} interchanging the role of the particles indexed by $1$ and $n$.
\end{proof}
\begin{lemma}
    \label{lemma:Jbar}
    Let us assume $p\ge q$ and $q-n+1>0$.  The system of SDEs (\ref{eq:Jbar}) with random initial condition $(\bar\lambda^{1,\epsilon}_0,\dots,\bar\lambda^{n,\epsilon}_0)$ such that $0\leq\bar\lambda^{1,\epsilon}_0\leq\dots\leq\bar\lambda^{n,\epsilon}_0\leq1 $ a.s. and independent from the Brownian motion $\mathbf{B}=(B^1_t,\dots,B^n_t)_t$ has a global pathwise unique strong solution $(\bar{\lambda}^{1,\epsilon}_t,\dots,\bar{\lambda}^{n,\epsilon}_t)_{t\geq0}$.
  
  Moreover, 
  
  \begin{eqnarray}
    \mathbb{P}\{\exists t>0:\bar\lambda^{i,\epsilon}_t=\bar\lambda^{i+1,\epsilon}_t \text{ and } \bar\lambda^{j,\epsilon}_t=\bar\lambda^{j+1,\epsilon}_t \text{ for some } 2\leq i<j\leq n-2\}=0.\nonumber
  \end{eqnarray}
\end{lemma}

\begin{proof}
    The proof of this Lemma follows the same steps as in Lemma \ref{lemma:Jtilde}, but with both $\bar\lambda^{1,\epsilon}$ and $\bar\lambda^{n,\epsilon}$ playing a peculiar role like $\tilde\lambda^{1,\epsilon}$ in (\ref{eq:Jtilde}) and $\check\lambda^{n,\epsilon}$ in (\ref{eq:Jcheck}).
\end{proof}

  \section{Appendix}
  This Lemma deals with the existence and uniqueness to the real Jacobi process and was proved in \cite{doumerc2005matrices}.
  \begin{lemma}[Real Jacobi Processes]
  \label{lemma:doumerc}
  The real Jacobi process of parameters $d,d'\ge0$ is the unique strong solution to
  \begin{equation*}
      dJ_t = 2\sqrt{J_t(1-J_t)}dB_t+(d-(d+d')J_t)dt
  \end{equation*}
  where $B$ is a real Brownian motion. The process $J$ remains in $(0,1)$ a.s. when $d\wedge d'\ge2$, hits $0$ a.s. if $0\leq d<2$ and hits $1$ a.s. if $0\le d'<2$.
  \end{lemma}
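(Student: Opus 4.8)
The statement is classical --- it is exactly the one-dimensional input used in \cite{doumerc2005matrices} --- so the plan is simply to isolate its three ingredients and recall how each is obtained: strong well-posedness, invariance of $[0,1]$, and Feller's boundary classification. For well-posedness I would first extend the coefficients off $[0,1]$: set $\sigma(x):=2\sqrt{x(1-x)}$ equal to $0$ for $x\notin[0,1]$ and keep the affine drift $b(x):=d-(d+d')x$. One then notes that $|\sqrt{x(1-x)}-\sqrt{y(1-y)}|\le\sqrt{|x-y|}$ for all $x,y\in\R$, so the extended $\sigma$ is globally $\tfrac12$-H\"older while $b$ is globally Lipschitz. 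Pathwise uniqueness then follows from the Yamada--Watanabe criterion (with modulus $\rho(u)=2\sqrt u$, for which $\int_{0^+}\rho(u)^{-2}\,du=\infty$), and weak existence holds because the extended coefficients are continuous with at most linear growth, so the Yamada--Watanabe theorem delivers a unique strong solution. Since $\sigma(0)=0$ with $b(0)=d\ge0$ and $\sigma(1)=0$ with $b(1)=-d'\le0$, the drift points into $[0,1]$ wherever the noise degenerates, and a standard comparison argument (equivalently, Feller's test applied at the extended boundaries) shows that the solution started in $[0,1]$ never leaves it; this is the process called $J$.

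For the boundary behaviour I would run Feller's classification on $(0,1)$. The quantity $\tfrac{2b(y)}{\sigma^2(y)}=\tfrac{d-(d+d')y}{2y(1-y)}$ decomposes by partial fractions as $\tfrac{d}{2y}-\tfrac{d'}{2(1-y)}$, so a scale density is $s'(x)=x^{-d/2}(1-x)^{-d'/2}$ and a speed density is $m'(x)=\tfrac14\,x^{d/2-1}(1-x)^{d'/2-1}$. Near $0$ one has $s'(x)$ comparable to $x^{-d/2}$, hence $\int_{0^+}s'<\infty$ precisely when $d<2$, which is the threshold for $0$ to become attainable; moreover $m'(x)$ is comparable to $x^{d/2-1}$, integrable at $0$ exactly when $d>0$, so $0$ is an exit boundary for $d=0$ and a regular boundary for $0<d<2$, reached from the interior in either case, while for $d\ge2$ it is an entrance (or natural) boundary and unattainable. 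Swapping $x\leftrightarrow 1-x$ and $d\leftrightarrow d'$ --- under which $1-J$ is again a Jacobi process, now with parameters $(d',d)$, since $-B$ is a Brownian motion --- yields the symmetric statement at $1$. Hence $J$ hits $0$ when $0\le d<2$ and hits $1$ when $0\le d'<2$, whereas when $d\wedge d'\ge2$ both $\int_{0^+}s'$ and $\int^{1^-}s'$ diverge, so $J$ stays in $(0,1)$ for all $t$.

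The main obstacle is the degeneracy of $\sigma$ at the two endpoints: one must check that the Yamada--Watanabe modulus estimate really holds up to the boundary (it does, by the square-root bound above, which also survives the extension by $0$), and that the Feller integrals are set up with the correct orientation so that ``$\int_{0^+}s'<\infty$'' genuinely means ``$0$ is reached in finite time with positive probability''. This strengthens to ``almost surely'' in the regimes actually used downstream --- in particular in the CIR comparisons, where the opposite endpoint (there $+\infty$) is a natural boundary, so escape in that direction is impossible --- but in general (both $d<2$ and $d'<2$) the process reaches each endpoint only with some positive probability. Apart from this, the argument is routine and the conclusions coincide with those of \cite{doumerc2005matrices}.
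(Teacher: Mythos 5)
The paper offers no proof of this lemma at all: it appears in the appendix as an imported result, with the one-line attribution that it ``was proved in \cite{doumerc2005matrices}''. So there is no internal argument to compare against, and your proposal supplies what the paper delegates to a citation. What you write is the standard and, as far as I can check, correct derivation. The H\"older bound $|\sqrt{x(1-x)}-\sqrt{y(1-y)}|\le\sqrt{|x-y|}$ does survive the extension of $\sigma$ by zero outside $[0,1]$, so Yamada--Watanabe gives pathwise uniqueness, and together with weak existence for continuous coefficients of linear growth this yields the unique strong solution; the sign of the drift at the two degenerate endpoints gives invariance of $[0,1]$; and your scale and speed densities $s'(x)=x^{-d/2}(1-x)^{-d'/2}$ and $m'(x)=\tfrac14 x^{d/2-1}(1-x)^{d'/2-1}$ are the right ones, so Feller's test makes $0$ accessible exactly when $d<2$, makes $1$ accessible exactly when $d'<2$, and makes both boundaries entrance (hence unattainable) when $d\wedge d'\ge2$. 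The only point where your discussion is slightly off is the closing caveat: when $0<d<2$ and $0<d'<2$ both endpoints are regular and the unique solution of the SDE is instantaneously reflected there, so the process is recurrent on $[0,1]$ and each endpoint is in fact hit almost surely; the genuine ``only with positive probability'' situation arises when an accessible endpoint is absorbing, i.e.\ when $d=0$ or $d'=0$ (for $d=d'=0$ exactly one of the two endpoints is ever reached). Since the lemma does not specify with what probability the boundary is ``hit'', this imprecision does not affect its correctness, and your argument is a legitimate self-contained replacement for the external reference.
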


  The next lemma deals with the existence and uniqueness to the CIR SDE and with the probability for the solution to hit zero. It is proved for instance in \cite[Theorem 6.2.2 and Proposition 6.2.3]{MR2362458}.
  \begin{lemma}
  \label{lamberton}
Let $a\geq0,b,\sigma\in\mathbb{R}$.  Suppose that $W$ is a standard Brownian motion defined on $\mathbb{R}_+$. For any real number $x\geq0$, there is a unique continuous, adapted process $X$, taking values in $\mathbb{R}_+$, satisfying $X_0=x$ and $$dX_t=(a-bX_t)dt+\sigma\sqrt{X_t}dW_t \text{ on } [0,\infty).$$
  Moreover, if we denote by $X^x$ the solution to this SDE starting at $x$ and by $\tau_0^x=\inf\{t\geq0 : X_t^x=0\}$,
  \begin{enumerate}
      \item If $a\geq\sigma^2/2$,  we have $\mathbb{P}(\tau^x_0=\infty)=1$, for all $x>0$.
      \item If $0\leq a <\sigma^2/2$ and $b\geq0$, we have  $\mathbb{P}(\tau^x_0<\infty)=1$, for all $x>0$.
      \item If $0\leq a <\sigma^2/2$ and $b<0$, we have  $0<\mathbb{P}(\tau^x_0<\infty)<1$, for all $x>0$.
  \end{enumerate}
  \end{lemma}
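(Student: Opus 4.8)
The plan is to obtain the four assertions from standard one-dimensional diffusion theory (Yamada--Watanabe uniqueness, the scale function, Feller's boundary dichotomy); the work is routine, the only care being needed at the degenerate boundary $0$.

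\emph{Existence, uniqueness, nonnegativity.} First I would rewrite the equation with diffusion coefficient $\sigma\sqrt{x^{+}}$: its modulus of continuity $\rho(u)=|\sigma|\sqrt u$ satisfies $\int_{0^{+}}\rho(u)^{-2}\,du=+\infty$, and $x\mapsto a-bx$ is Lipschitz, so the Yamada--Watanabe criterion gives pathwise uniqueness. Weak existence holds because the coefficients are continuous of linear growth (whence no explosion), and pathwise uniqueness then promotes this to a unique strong solution $X$. To see $X\ge0$ I would compare (Theorem~\ref{Ikeda}) with $Z\equiv0$, which solves the same SDE with the smaller Lipschitz drift $x\mapsto(a-bx)\wedge0$ (this is where $a\ge0$ enters), so $\sqrt{X^{+}}=\sqrt X$ and $X$ is genuinely the CIR process; when $a=0$ and $x=0$, uniqueness moreover forces $X\equiv0$.

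\emph{Boundary at $0$ and assertion (1).} On $(0,\infty)$ the generator is $\tfrac{\sigma^{2}}{2}x\partial_{x}^{2}+(a-bx)\partial_{x}$, with scale density $s'(x)=x^{-2a/\sigma^{2}}e^{2bx/\sigma^{2}}$. Since $s'(x)\sim x^{-2a/\sigma^{2}}$ near $0$, one has $\int_{0^{+}}s'<\infty$ iff $a<\sigma^{2}/2$. Hence if $a\ge\sigma^{2}/2$ then $s(0^{+})=-\infty$; applying optional stopping to the bounded martingale $s(X_{t\wedge\tau_{\varepsilon}\wedge\tau_{N}})$ on $(\varepsilon,N)$ gives $\mathbb P(\tau_{\varepsilon}<\tau_{N})=\frac{s(N)-s(x)}{s(N)-s(\varepsilon)}\to0$ as $\varepsilon\downarrow0$, so $\mathbb P(\tau_{0}<\tau_{N})=0$ for each $N$, and since there is no explosion ($\tau_{N}\uparrow\infty$) we get $\mathbb P(\tau_{0}=\infty)=1$, which is (1). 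If instead $a<\sigma^{2}/2$ then $s(0)>-\infty$, i.e. $0$ is accessible and $\mathbb P(\tau_{0}<\infty)>0$.

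\emph{Assertions (2) and (3).} When $a<\sigma^{2}/2$ the outcome is decided at $+\infty$ via $s(+\infty)=\int^{\infty}x^{-2a/\sigma^{2}}e^{2bx/\sigma^{2}}\,dx$, which is $+\infty$ iff $b\ge0$ (exponential growth if $b>0$; divergence of $\int^{\infty}x^{-2a/\sigma^{2}}dx$ since $2a/\sigma^{2}<1$ if $b=0$) and finite if $b<0$. Letting $N\uparrow\infty$ in $\mathbb P(\tau_{0}<\tau_{N})=\frac{s(N)-s(x)}{s(N)-s(0)}$ (again using non-explosiveness, so $\{\tau_{0}<\tau_{N}\}\uparrow\{\tau_{0}<\infty\}$): for $b\ge0$ the ratio tends to $1$, giving (2); for $b<0$ it tends to $\frac{s(+\infty)-s(x)}{s(+\infty)-s(0)}\in(0,1)$ because $s$ is strictly increasing with $s(0)<s(x)<s(+\infty)$, giving (3) (on the complementary event the process must go to $+\infty$, once more by non-explosiveness).

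\emph{Assertion (4), and the obstacle.} A uniform moment bound follows from $\mathbb E[X_{u\wedge\tau_{N}}]=x+\mathbb E\!\int_{0}^{u\wedge\tau_{N}}(a-bX_{v})\,dv\le x+au+|b|\int_{0}^{u}\mathbb E[X_{v\wedge\tau_{N}}]\,dv$ and Gronwall, giving $\mathbb E[X_{u}]\le(x+au)e^{|b|u}<\infty$ after Fatou; hence $\int_{0}^{\cdot}\sigma\sqrt{X_{v}}\,dW_{v}$ is a true martingale. Conditioning the integrated equation on $\mathcal F_{t}$ then shows $g(s):=\mathbb E[X_{s}\mid\mathcal F_{t}]$ solves $g'=a-bg$, $g(t)=X_{t}$, whose solution is $X_{t}e^{-b(s-t)}+\tfrac ab\bigl(1-e^{-b(s-t)}\bigr)$ (with limit $X_{t}+a(s-t)$ as $b\to0$), and the Markov property identifies this with $\mathbb E[X_{s}\mid X_{t}]$. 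The only genuinely delicate points are the non-Lipschitz diffusion coefficient (handled by Yamada--Watanabe together with the comparison for nonnegativity) and, in the boundary analysis, correctly coupling scale-accessibility of $0$ with non-explosiveness so that the passage $N\to\infty$ in $\mathbb P(\tau_{0}<\tau_{N})$ is legitimate and, in case (3), so that failure to reach $0$ forces $X_{t}\to+\infty$.
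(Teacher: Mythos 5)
Your proof is correct: the Yamada--Watanabe argument for pathwise uniqueness and strong existence, the comparison with $Z\equiv0$ for nonnegativity, the scale-density computation $s'(x)=x^{-2a/\sigma^2}e^{2bx/\sigma^2}$ with the two-sided exit probabilities and the passage $N\to\infty$ justified by non-explosion, and the Gronwall/ODE argument for assertion (4) (including the caveat at $b=0$) are all sound. The paper itself gives no proof of this lemma, deferring to Lamberton--Lapeyre and Cox--Ingersoll--Ross, and those references follow essentially this same route, so your argument matches the intended one.
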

  
      The following result is the  Ikeda-Watanabe Theorem, which allows to compare two Itô processes if their starting points and their drift coefficients are comparable, and if their diffusion coefficients  are regular enough. It is proved for instance in \cite[Theorem V.43.1 p.269]{MR1780932}.
 \begin{theorem}(Ikeda-Watanabe)
 \label{Ikeda}
 Suppose that, for $i=1,2$, 
 \begin{equation}
     X^i_t = X^i_0+\int_0^t\sigma(X^i_s)dB_s+\int_0^t\beta_s^ids,
 \end{equation}
 and that there exist $b:\mathbb{R}\mapsto\mathbb{R}$, such that
 $$\beta^1_s\geq b(X^1_s)\text{, }b(X^2_s)\geq \beta^2_s.$$
 Suppose also that
 \begin{enumerate}
     \item $\sigma$ is measurable and there exists an increasing function $\rho:\mathbb{R}_+\mapsto\mathbb{R}_+$ such that $$\int_{0^+}\rho(u)^{-1}du=\infty,$$ and for all $x,y\in\mathbb{R},$
     $$(\sigma(x)-\sigma(y))^2\leq\rho(|x-y|);$$
     \item $X^1_0\geq X^2_0$ a.s.;
     \item $b$ is Lipschitz.
 \end{enumerate}
 Then $X^1_t\geq X^2_t$ for all $t$ a.s.
 \end{theorem}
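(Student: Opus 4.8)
The plan is to establish the comparison by the Yamada--Watanabe smoothing technique, applied to the difference process $Y_t := X^2_t - X^1_t$; the goal is to show that $Y^+_t = 0$ for all $t$ almost surely. First I would subtract the two defining equations to get
\[
Y_t = Y_0 + \int_0^t \big(\sigma(X^2_s)-\sigma(X^1_s)\big)\,dB_s + \int_0^t \big(\beta^2_s-\beta^1_s\big)\,ds,
\]
noting that hypothesis (2) yields $Y_0 \le 0$ a.s., and that the two drift hypotheses combine into the pointwise bound $\beta^2_s-\beta^1_s \le b(X^2_s)-b(X^1_s)$.

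The core of the argument is a sequence of smooth approximations of $x\mapsto x^+$ whose second derivative is tuned to the modulus $\rho$. Exploiting $\int_{0^+}\rho(u)^{-1}\,du = +\infty$, I would choose a strictly decreasing sequence $a_n\downarrow 0$ with $\int_{a_n}^{a_{n-1}}\rho(u)^{-1}\,du = n$, together with continuous functions $\psi_n$ supported in $(a_n,a_{n-1})$ satisfying $0\le \psi_n(u)\le \frac{2}{n\rho(u)}$ and $\int_0^\infty\psi_n(u)\,du = 1$. Setting $\phi_n(x) := \int_0^{x^+}\!\!\int_0^y \psi_n(u)\,du\,dy$, each $\phi_n$ is $C^2$, vanishes on $(-\infty,0]$, increases to $x^+$, and has derivatives obeying $0\le\phi_n'\le 1$, $\phi_n'\to\mathds{1}_{(0,\infty)}$ pointwise, and $\phi_n''(x)=\psi_n(x)$ for $x>0$.

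Next I would apply Itô's formula to $\phi_n(Y_t)$ and localize along the stopping times $T_m := \inf\{t\ge0 : |X^1_t|\vee|X^2_t|\ge m\}$, under which the diffusion coefficient of $Y$ stays bounded (since $(\sigma(X^2_s)-\sigma(X^1_s))^2\le\rho(|Y_s|)\le\rho(2m)$), so that the stochastic integral is a genuine martingale. Taking expectations and using $\phi_n(Y_0)=0$ leaves two terms. The second-order term is controlled by the defining estimate: on $\{Y_s>0\}$,
\[
\phi_n''(Y_s)\,\big(\sigma(X^2_s)-\sigma(X^1_s)\big)^2 \le \psi_n(Y_s)\,\rho(Y_s) \le \frac{2}{n},
\]
so its contribution is at most $t/n\to0$. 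For the first-order term I would use $0\le\phi_n'\le 1$, the drift bound above, and the Lipschitz property (3), which with constant $L$ gives $b(X^2_s)-b(X^1_s)\le L\,Y^+_s$ on $\{Y_s>0\}$, whence $\phi_n'(Y_s)(\beta^2_s-\beta^1_s)\le L\,Y^+_s$. Letting $n\to\infty$ by dominated convergence then yields $\E[\phi_n(Y_{t\wedge T_m})]\to\E[Y^+_{t\wedge T_m}]$ and the inequality
\[
\E\big[Y^+_{t\wedge T_m}\big] \le L\int_0^t \E\big[Y^+_{s\wedge T_m}\big]\,ds.
\]

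Finally, Grönwall's inequality forces $\E[Y^+_{t\wedge T_m}]=0$ for every $t$; letting $m\to\infty$ (so $T_m\uparrow\infty$ since the paths are continuous) gives $Y^+_t=0$, i.e. $X^2_t\le X^1_t$, almost surely for each fixed $t$, and path continuity upgrades this to all $t$ simultaneously. The main obstacle is the delicate balance in the second-order estimate: the whole method hinges on choosing $\psi_n$ with $\psi_n\rho\le 2/n$ while keeping $\int\psi_n=1$, which is precisely what the Osgood-type condition $\int_{0^+}\rho^{-1}=+\infty$ makes possible; justifying the interchange of limit and expectation and the martingale property under the localization are the remaining, essentially routine, technical checks.
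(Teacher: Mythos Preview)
Your argument is correct and is, in fact, the classical Yamada--Watanabe/Ikeda--Watanabe proof: smooth $x\mapsto x^+$ via $\phi_n$ calibrated to the modulus $\rho$, apply It\^o's formula to $\phi_n(X^2-X^1)$, kill the second-order term using $\psi_n\rho\le 2/n$, bound the drift term by Lipschitz continuity of $b$, and close with Gr\"onwall. The technical points you flag (localization to make the stochastic integral a true martingale, dominated convergence as $n\to\infty$, then $T_m\uparrow\infty$) are exactly the right ones and go through without difficulty.

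There is nothing to compare against in the paper itself: the theorem appears in the Appendix as a quoted tool, with no proof given and a reference to Rogers--Williams \emph{Diffusions, Markov Processes, and Martingales}, Theorem~V.43.1. Your write-up is essentially that proof, so you are in complete agreement with the cited source.
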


\nocite{*}
\bibliographystyle{amsalpha}
\bibliography{jacobi.bib}
\end{document}